\newtheorem{theorem}{Theorem}
\newtheorem{remark}{Remark}
\newtheorem{lemma}{Lemma}
\newtheorem{assumption}{Assumption}
\newtheorem{problem}{Problem}
\begin{document}
%
\title{Optimal Decentralized Control with Asymmetric Partial Information Sharing
}
%
%
%

\author{Xiao~Liang,~Qingyuan Qi,~Huanshui~Zhang,~\emph{Senior~Member,~IEEE,}~and~~Lihua Xie~\emph{Fellow,~IEEE,}
\thanks{This work is supported by the Taishan Scholar Construction Engineering by Shandong Government, the National Natural Science Foundation of China (61903233, 61120106011, 61903233, 61633014, 61403235, 61573221).}
\thanks{H. Zhang (hszhang@sdu.edu.cn) is with School of Control Science and Engineering, Shandong University, Jinan, P.R.China. L. Xie (ELHXIE@ntu.edu.sg) is with the School of Electrical and Electronic Engineering, Nanyang Technological University, Singapore 639798. Q. Qi is with Institute of Complexity Science, College of Automation, Qingdao University, Qingdao 266071, China. X. Liang (liangxiao\_sdu@163.com) is with College of Electrical Engineering and Automation, Shandong University of Science and Technology, Qingdao, Shandong, P.R.China 250061.}
}

%
%

\markboth{}%
{Shell \MakeLowercase{\textit{et al.}}: Bare Demo of IEEEtran.cls for IEEE Journals}
%



\maketitle

\begin{abstract}
This paper considers the optimal decentralized control for networked control systems (NCSs) with asymmetric partial information sharing between two controllers. In this NCSs model, the controller 2 (C2) shares its observations and part of its historical control inputs with the controller 1 (C1), whereas C2 cannot obtain the information of C1 due to network constraints. We present the optimal estimators for C1 and C2 respectively based on asymmetric observations. Since the information for C1 and C2 are asymmetric, the estimation error covariance (EEC) is coupled with the controller which means that the classical separation principle fails. By applying the Pontryagin's maximum principle, we obtain a solution to the forward and backward stochastic difference equations. Based on this solution, we derive the optimal controllers to minimize a quadratic cost function. Combining the optimal controllers with the EEC, the controller C1 is decoupled from the ECC. It should be emphasized that the control gain is dependent on the estimation gain. What's more, the estimation gain satisfies the forward Riccati equation and the control gain satisfies the backward Riccati equation which makes the problem more challenging. We propose iterative solutions to the Riccati equations and give a suboptimal solution to the optimal decentralized control problem.
\end{abstract}

\begin{IEEEkeywords}
Optimal decentralized control, networked control systems, forward and backward stochastic difference equations, Riccati equation.
\end{IEEEkeywords}

%
\IEEEpeerreviewmaketitle

\section{Introduction}
Networked control systems (NCSs), where control loops are closed via communication networks such that control and measurement signals can be exchanged between system constituents (sensors, estimators, controllers and actuators), have received increasing interests \cite{R1,R2}. Compared with classical point-to-point feedback control systems, NCSs have huge advantages, such as lower cost, easy maintenance and higher flexibility \cite{R3,R4}. Centralized configuration and decentralized configuration are two important configurations of NCSs.

Centralized configuration, in spite of the existence of multiple sensors and actuators, in some cases, can be regarded as a single feedback loop configuration where all measurements are delivered to the controller. The control decision is then sent to the dedicated actuator. Since the controller can obtain all measurements, the derived controller may be globally optimal. Many results have been given based on this configuration \cite{R5,R6,R7}. By using the optimal encoder-decoder design, \cite{R5} presents the optimal controller design for an arbitrary measurement packet-dropout pattern. In virtue of the method of completing square, \cite{R7} solves the linear quadratic regulation control for NCSs with input delay and measurement packet-dropout. Nevertheless, this configuration bears a few disadvantages: (i) the system is prone to being shut down completely due to the failure of the central processing unit (CPU); (ii) there is a high cost for CPU to gather information from all sensors.

In view of the limitations of centralized configuration, the other configuration of NCSs, decentralized configuration has gain continuous attention in recent years \cite{R8,R9} and references therein. For a large scale NCS, the control decision may not be made by a single controller, but multiple controllers that access different information about the system make decisions together. This configuration reduces the point of failure risk caused by centralized control, alleviates the computation burden of controllers, and decreases the implementation complexity of NCSs. Generally, in decentralized configuration, non-linear control decisions may provide better performance than linear control decisions \cite{R10,R11}. In other words, linear control decisions are not globally optimal. It is noted that under some special information structures, such as partially nested information structure, pioneered by \cite{R12}, linear control decisions may be globally optimal \cite{R13}. Recently, \cite{R14} proposes a novel general structure of partial historical sharing, in which controllers share part of their historical information (historical observations and controls). This structure comprises a huge class of decentralized configuration where controllers can exchange information. Based on this structure, under the assumption that controllers/system satisfy some special forms, \cite{R15}, \cite{R23} gives the optimal linear control strategies by using the common information approach. Inspired by this work, \cite{R16,R17} investigate the control of NCSs with local and remote controllers. In \cite{R16,R17}, the optimal controllers are derived by using maximum principle and dynamic programm respectively. However, both the papers \cite{R16,R17} assume that the local controller can access the exact state information of the system which is generally not real in practice. The measured state is inevitable to be corrupted by noises.

Inspired by \cite{R16,R17}, this paper studies a general decentralized configuration of NCSs as shown in Fig. 1. The state is observed by sensor 1 and sensor 2 as standard observations $y_k^1$ and $y_k^2$ respectively. The controller 2 (C2) shares its observations and historical control inputs $\{y_0^2,\ldots,y_k^2,u_0^2,\ldots,u_{k-1}^2\}$ with controller 1 (C1), but C1 does not share its information with C2 due to network constraints. This results in information asymmetry between C1 and C2.  Estimator 1 estimates the state by using the observations of itself and C2, and delivers its estimate to C1. The two controllers control the plant simultaneously.
\begin{figure}[htbp]
  \begin{center}
  \includegraphics[width=0.41\textwidth]{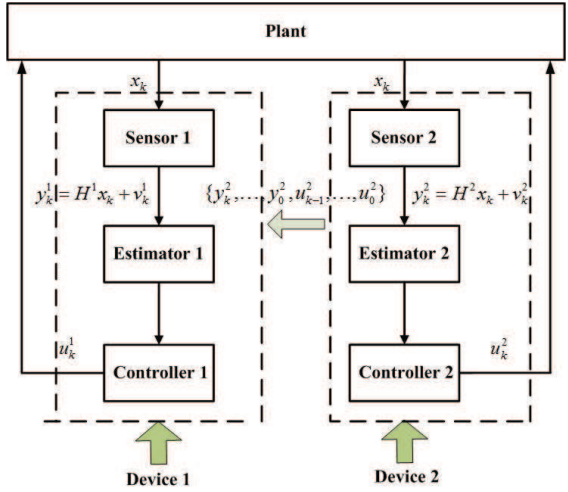}
  \caption{Overview of NCSs.} \label{fig:digit}
  \end{center}
\end{figure}

In this paper, we consider the optimal decentralized control for NCSs with  asymmetric information and partial historical sharing. Firstly based on the asymmetric observations, optimal estimators are derived for C1 and C2 respectively. Due to the adaptability of the controllers, the estimation error covariance (EEC) of estimator 2 is coupled with C1. By applying the Pontryagin's maximum principle, the solution to the forward and backward stochastic difference equations (FBSDEs) is presented. By making use of this solution, the optimal controllers are given in terms of two coupled Riccati equations. Combining the optimal controllers with the EEC, the controllers are decoupled from the EEC. However, the control gain is coupled with the estimator gain. Furthermore, the control gain satisfies a backward Riccati equation while the estimator gain is related to a forward Riccati equation. To address this problem, we introduce a novel iterative method  to solve the coupled backward and forward Riccati equations in the infinite horizon. Finally, numerical examples are given to illustrate that estimators are stable, the regulated states are bounded in the mean square sense, and the algorithm is valid.

The contributions of this paper are as follows:

(i) So far as we know, it is the first time to give the explicit expressions of the optimal decentralized controllers for NCSs with asymmetric information and partial history sharing without the restriction on the form of controllers.

(ii) As is well-known that the classical separation principle fails when the ECC is dependent on the control input \cite{R18}. In this paper, we break through this obstacle and succeed to separate the control input from the ECC.

(iii) We propose a novel approach of solving the coupled backward Riccati equation (associated with the control gain) and the forward Riccati equation (associated with the estimator gain).
%
%
%
%

\emph{Notation:}
Denote $\mathbb{E}$ as the mathematical expectation operator. $\mathbb{R}^{m}$ presents the $m$-dimensional real Euclidean space.  $tr(X)$ stands for the trace of matrix $X$. Define $\mathcal{F}\{H_k\}$ as the natural filtration generated by the random variable $h_k$, i.e., $\mathcal{F}\{H_k\}=\sigma\{h_0,\ldots,h_k\}$. $X\geq0 (>0)$ denotes that $X$ is a positive semi-definite (positive definite) matrix. $\hat{x}_{k|n}=\mathbb{E}[x_k|\mathcal{F}\{H_k\}]$ denotes the conditional expectation of $x_k$ with respect to $\mathcal{F}\{H_k\}$.

\section{Problem Formulation and Preliminaries}
We shall consider the following discrete-time linear system
\begin{align}
x_{k+1}&=Ax_k+B^1u_k^1+B^2u_k^2+\omega_k,\label{1}\\
y_k^1&=H^1x_k+v_k^1,\label{2}\\
y_k^2&=H^2x_k+v_k^2,\label{3}
\end{align}
where $x_k\in\mathbb{R}^m$ is the state, $u_k^1\in\mathbb{R}^l$ is the control input generated by C1, $u_k^2\in\mathbb{R}^r$ is the control input generated by C2, $y_k^1\in\mathbb{R}^p$ and $y_k^2\in\mathbb{R}^q$ are the observations of sensor 1 and sensor 2. $A,$ $B^1,$ $B^2,$ $H^1$ and $H^2$ are constant matrices with appropriate dimensions. $\omega_k\in\mathbb{R}^m, v_k^1\in\mathbb{R}^p$ and $v_k^2\in\mathbb{R}^q$ are system noise and observation noises with zero mean and covariances $Q_\omega, Q_{v^1}, Q_{v^2}$, respectively. The initial value of state is $x_0$ with mean $\mu$ and covariance $\Sigma$. $x_0,$ $\omega_k,$ $v_k^1$ and $v_k^2$ are Gaussian and independent of each other.

The associated cost function is given by:
\begin{align}
\nonumber J_N=&\mathbb{E}\bigg\{\sum_{k=0}^N[x_k'Qx_k+u_k^{1'}R^1u_k^1+u_{k}^{2'}R^2u_{k}^2]\\
             &\quad\quad+x_{N+1}'\Theta x_{N+1}\bigg\},\label{4}
\end{align}
where $Q,$  $R^1,$ $R^2$ and $\Theta$ are positive semi-definite matrices. $\mathbb{E}$ is the mathematical expectation over the random processes $\{\omega_k\},$  $\{v^1_k\},$ $\{v^2_k\}$ and the random variable $x_0$.
\begin{remark}
The weighting matrices and system matrices can be time-varying which does not affect the derivations of the results of this paper. For simplicity, we consider relevant matrices to be time-invariant.
\end{remark}
Observe from Fig.1 that C1 can obtain the observations of sensor 1 and sensor 2, i.e., $Y^1_k=\{y_0^1,\ldots,y_k^1\}$ and $Y^2_k=\{y_0^2,\ldots,y_k^2\}$. C2 has the access to the observations of sensor 2, i.e., $Y^2_k$. We set $Y_k=\{y_0,\ldots,y_k\}$ as observations of C1 where $y_k=\begin{bmatrix}y_k^{1'}&y_k^{2'}\end{bmatrix}'$. Then the observation equation for C1 can be written as
\begin{align}
y_k&=Hx_k+v_k,\label{14}
\end{align}
where $H=\begin{bmatrix}H^{1'}&H^{2'}\end{bmatrix}'$ and  $v_k=\begin{bmatrix}v^{1'}_k&v^{2'}_k\end{bmatrix}'$. Note that the observation for C2 is (\ref{3}).

Then the problem of this paper is formulated as follows:
\begin{problem}
Find the $\mathcal{F}\{Y_k\}$-measurable $u_k^1$ and the $\mathcal{F}\{Y^2_k\}$-measurable $u^2_k$ such that cost function (\ref{4}) is minimized subject to system (\ref{1}).
\end{problem}

Following a similar discussion in \cite{R19}, we apply Pontryagin's maximum principle to system (\ref{1}) with cost function (\ref{4}) and yield the following costate equations:
\begin{align}
\lambda_{k-1}&=\mathbb{E}[A'\lambda_k+Qx_k|\mathcal{F}\{Y_k\}],\label{5}\\
0&=\mathbb{E}[B^{1'}\lambda_k|\mathcal{F}\{Y_k\}]+R^1u_k^1,\label{6}\\
0&=\mathbb{E}[B^{2'}\lambda_k|\mathcal{F}\{Y^2_k\}]+R^2u^2_k,\label{7}\\
\lambda_{N}&=\mathbb{E}[\Theta x_{N+1}|\mathcal{F}\{Y_{N+1}\}],\label{8}
\end{align}
where $\lambda_k$ is the costate variable and $\Theta$ is the terminal weighting matrix given in (\ref{4}).
\begin{remark}
Since C2 only shares its history control with C1, i.e., the current input generated by C2 is not available to C1, we cannot use the leader-follower approach \cite{R20} to deal with the above control problem. In other words, the method of calculating C2 firstly and then computing C1 based on the results of C2, is not feasible in this structure.
\end{remark}
\begin{remark}
Since the information for C1 and C2 are asymmetric, the method of augmenting the two controllers as one and then applying the traditional optimal control means \cite{R21}, is not applicable. Hence, we aim to develop a novel method of calculating C1 and C2 simultaneously.
\end{remark}
\section{Optimal Control Design}
In this section, we shall present a solution to the optimal control problem stated in Section II.

Firstly, noting the adaptability of $u_1$,  we introduce the following definitions about $u_k^1$:
\begin{align}
\hat{u}_k^1&=\mathbb{E}[u_k^1|\mathcal{F}\{Y^2_k\}], \label{9}\\
\tilde{u}_k^1&=u_k^1-\hat{u}_k^1.\label{10}
\end{align}
It is not hard to find that $\hat{u}^1_k$ and $\tilde{u}^1_k$ have the following properties:
\begin{align}
\label{11}&\mathbb{E}[\tilde{u}^1_k]=0,\mathbb{E}[\tilde{u}^1_k|\mathcal{F}\{Y^2_k\}]=0,\\
\label{12}&\mathbb{E}[\tilde{u}^1_k|\mathcal{F}\{Y_k\}]=\tilde{u}^1_k,\mathbb{E}[\hat{u}^1_k|\mathcal{F}\{Y_k\}]=\hat{u}^1_k.
\end{align}
Through the above definitions, $u_k^1$ is decomposed into two parts, i.e., $\hat{u}^1_k$ and $\tilde{u}^1_k$. Moreover, via the decomposition, $\hat{u}^1_k$ has the same adaptability with $u^2_k$. Then we rewrite system (\ref{1}) and cost function (\ref{4}) as
\begin{align}
x_{k+1}&=Ax_k+Bu_k+B^1\tilde{u}^1_k+\omega_k,\label{13}\\
\nonumber J_N=&\mathbb{E}\bigg\{\sum_{k=0}^N[x_k'Qx_k+u_{k}'Ru_{k}+\tilde{u}_{k}^{1'}R^1\tilde{u}_{k}^1]\\
             &\quad\quad+x_{N+1}'\Theta x_{N+1}\bigg\},\label{15}
\end{align}
where $B=\begin{bmatrix}B^1&B^2\end{bmatrix}$, $u_k=\begin{bmatrix}\hat{u}^1_k\\u^2_k\end{bmatrix}$ and $R=\begin{bmatrix}R^1&0\\0&R^2\end{bmatrix}$.

Next we shall develop the optimal estimators for C1 and C2 respectively in the following lemma.
\begin{lemma}
With observations $\{y_0,\ldots,y_k\}$ from system (\ref{1}) and (\ref{14}), the optimal estimator in device 1 is presented as
\begin{align}
\hat{x}^1_{k|k}&=\hat{x}^1_{k|k-1}+G_{k|k-1}(y_k-H\hat{x}^1_{k|k-1}),\label{16}\\
\hat{x}^1_{k|k-1}&=A\hat{x}^1_{k-1|k-1}+Bu_{k-1}+B^1\tilde{u}_{k-1}^1,\label{new22}
\end{align}
where $G_{k|k-1}=\Sigma^1_{k|k-1}H^{'}(H\Sigma^1_{k|k-1}H^{'}+Q_{v})^{-1}$ and $\Sigma_{k|k-1}^1$ is the estimation error covariance satisfying
\begin{align}
\Sigma^1_{k|k-1}&=\mathbb{E}[(x_k-\hat{x}^1_{k|k-1})(x_k-\hat{x}^1_{k|k-1})']\nonumber\\
&=A\Sigma_{k-1|k-1}^1A'+Q_\omega,\label{17}\\
\nonumber\Sigma^1_{k|k}&=\mathbb{E}[(x_k-\hat{x}^1_{k|k})(x_k-\hat{x}^1_{k|k})']\nonumber\\
\nonumber              &=(I-G_{k|k-1}H)\Sigma_{k|k-1}^1(I-G_{k|k-1}H)'\\
                       &\quad+G_{k|k-1}Q_{v}G^{'}_{k|k-1},\label{18}
\end{align}
with initial value $x^1_{0|-1}=\mu$ and $\Sigma^1_{0|-1}=\Sigma$.

On the other hand, given observations $\{y_0^2,\ldots,y_k^2\}$ from system (\ref{1}) and (\ref{3}), the optimal estimator for device 2 is given by
\begin{align}
\hat{x}^2_{k|k}&=\hat{x}^2_{k|k-1}+G^2_{k|k-1}(y_k^2-H^2\hat{x}^2_{k|k-1}),\label{19}\\
\hat{x}^2_{k|k-1}&=A\hat{x}^2_{k-1|k-1}+Bu_{k-1},\label{new23}
\end{align}
where $G^2_{k|k-1}=\Sigma^2_{k|k-1}H^{2'}(H^2\Sigma^2_{k|k-1}H^{2'}+Q_{v^2})^{-1}$ and $\Sigma_{k|k-1}^2$ is the estimation error covariance satisfying
\begin{align}
\Sigma^2_{k+1|k}&=\mathbb{E}[(x_{k+1}-\hat{x}^2_{k+1|k})(x_{k+1}-\hat{x}^2_{k+1|k})']\nonumber\\
&=A\Sigma_{k|k}^2A'+\mathbb{E}[A(x_{k}-\hat{x}^2_{k|k})\tilde{u}_{k}^{1'}B^{1'}]\nonumber\\
&\quad+\mathbb{E}[B^1\tilde{u}^1_{k}(x_{k}-\hat{x}^2_{k|k})'A']\nonumber\\
&\quad+\mathbb{E}[B^1\tilde{u}^1_{k}\tilde{u}^{1'}_{k}B^{1'}]+Q_\omega,\label{20}\\
\nonumber\Sigma^2_{k|k}&=\mathbb{E}[(x_k-\hat{x}^2_{k|k})(x_k-\hat{x}^2_{k|k})']\nonumber\\
\nonumber              &=(I-G^2_{k|k-1}H^2)\Sigma_{k|k-1}^2(I-G^2_{k|k-1}H^2)'\\
                       &\quad+G^2_{k|k-1}Q_{v^2}G^{2'}_{k|k-1},\label{21}
\end{align}
with initial value $x^2_{0|-1}=\mu$ and $\Sigma^2_{0|-1}=\Sigma$.
\begin{proof}
The above optimal estimators can be obtained directly by using the standard Kalman filtering \cite{R22}.
\end{proof}
\end{lemma}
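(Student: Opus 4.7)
The plan is to derive the two estimators by reducing each to a standard Kalman filtering problem, while carefully tracking how the hidden residual $\tilde{u}_k^1$ enters the dynamics seen by C2.

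First I would verify the estimator for C1. Note that both $u_k$ (which C1 can compute because $\hat{u}_k^1 = \mathbb{E}[u_k^1 \mid \mathcal{F}\{Y_k^2\}]$ is measurable with respect to $\mathcal{F}\{Y_k^2\} \subseteq \mathcal{F}\{Y_k\}$) and $\tilde{u}_k^1 = u_k^1 - \hat{u}_k^1$ are $\mathcal{F}\{Y_k\}$-measurable. Hence, from C1's perspective, (\ref{13}) is a linear Gaussian system driven by a completely known input $Bu_k + B^1\tilde{u}_k^1$ and the white Gaussian noise $\omega_k$. The standard Kalman recursion then yields the prediction equation (\ref{new22}), the innovation update (\ref{16}) with gain $G_{k|k-1}$, and the Riccati--Lyapunov pair (\ref{17})--(\ref{18}); the initial conditions $\hat{x}^1_{0|-1} = \mu$, $\Sigma^1_{0|-1} = \Sigma$ come from the prior on $x_0$.

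The harder case is C2, since $\tilde{u}_k^1$ is \emph{not} $\mathcal{F}\{Y_k^2\}$-measurable, so for C2 the term $B^1\tilde{u}_k^1$ acts as an additional disturbance. I would exploit the two crucial properties from (\ref{11}): $\mathbb{E}[\tilde{u}_k^1] = 0$ and $\mathbb{E}[\tilde{u}_k^1 \mid \mathcal{F}\{Y_k^2\}] = 0$. The latter makes $\tilde{u}_k^1$ a zero-mean increment with respect to C2's filtration, which is exactly what is needed so that the one-step prediction reads $\hat{x}^2_{k+1|k} = A\hat{x}^2_{k|k} + Bu_k$, giving (\ref{new23}). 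Since $u_k$ is $\mathcal{F}\{Y_k^2\}$-measurable, the measurement update (\ref{19}) and the posterior covariance (\ref{21}) follow from the standard Kalman update applied to the innovation $y_k^2 - H^2\hat{x}^2_{k|k-1}$, with $v_k^2$ independent of everything measurable at time $k-1$.

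The main obstacle, and the only step that differs from textbook Kalman filtering, is the prediction covariance $\Sigma^2_{k+1|k}$ in (\ref{20}). Expanding
\begin{equation*}
x_{k+1} - \hat{x}^2_{k+1|k} = A(x_k - \hat{x}^2_{k|k}) + B^1\tilde{u}_k^1 + \omega_k,
\end{equation*}
taking outer products and expectations, and using that $\omega_k$ is independent of both the filtration and $\tilde{u}_k^1$, I obtain the four terms appearing in (\ref{20}): the classical propagation $A\Sigma_{k|k}^2 A'$, the noise contribution $Q_\omega$, and the two cross terms plus the quadratic term involving $\tilde{u}_k^1$. These extra terms cannot be dropped, because $\tilde{u}_k^1$ generally correlates with the estimation error $x_k - \hat{x}^2_{k|k}$ through C1's extra information $Y_k^1$; this coupling is precisely what breaks the separation principle and is highlighted in the paper's introduction. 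Once (\ref{20}) is in place, the recursion is fully specified with initial data $\hat{x}^2_{0|-1} = \mu$, $\Sigma^2_{0|-1} = \Sigma$, completing the proof.
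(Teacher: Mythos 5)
Your proposal is correct and follows essentially the same route as the paper, whose proof is simply an appeal to standard Kalman filtering: you reduce C1's estimator to a textbook filter with a fully measurable input, and for C2 you use $\mathbb{E}[\tilde{u}_k^1\mid\mathcal{F}\{Y_k^2\}]=0$ in the prediction step and keep the cross and quadratic $\tilde{u}_k^1$ terms in $\Sigma^2_{k+1|k}$. The only difference is that you spell out the details (in particular the non-textbook form of (\ref{20})) that the paper leaves implicit in its one-line citation of \cite{R22}.
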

\begin{remark}
As can be seen from (\ref{20}), the EEC $\Sigma^2_{k+1|k}$ is coupled with the controller $\tilde{u}^1_{k}$ which means that the well-known separation principle fails, resulting in a long-standing fundamental problem of coupled control-estimation. How to decouple the controller from the estimation is a challenging and unsolved problem \cite{R18}.
\end{remark}
It is noted that by making use of (\ref{1}) and the costate equations (\ref{5})-(\ref{8}), the optimal $u^2_k$ can be easily obtained. However, to calculate $u_k^1$, we need the information of $u_k^2$ which is not available due to partial historical sharing. To calculate the two controllers simultaneously, we firstly put forward the following lemma.
\begin{lemma}
The costate equations (\ref{5})-(\ref{8}) are equivalent to the following equations:
\begin{align}
\lambda_{k-1}&=\mathbb{E}[A'\lambda_k+Qx_k|\mathcal{F}\{Y_k\}],\label{22}\\
0&=\mathbb{E}[B'\lambda_k|\mathcal{F}\{Y^2_k\}]+Ru_k,\label{23}\\
\nonumber0&=\mathbb{E}[B^{1'}\lambda_k|\mathcal{F}\{Y_k\}]-\mathbb{E}[B^{1'}\lambda_k|\mathcal{F}\{Y^2_k\}]\\
&\quad+R^1\tilde{u}^1_k,\label{24}\\
\lambda_{N}&=\mathbb{E}[\Theta x_{N+1}|\mathcal{F}\{Y_{N+1}\}].\label{25}
\end{align}
\begin{proof}
Taking the mathematical expectation over (\ref{6}) with $\mathcal{F}\{Y_k^2\}$ and using (\ref{9}), we have that
\begin{align}
0&=\mathbb{E}[B^{1'}\lambda_k|\mathcal{F}\{Y^2_k\}]+R^1\hat{u}^1_k,\label{26}
\end{align}
Augmenting (\ref{7}) with (\ref{26}), it is not hard to obtain equation (\ref{23}). Subtracting (\ref{26}) from (\ref{6}) and using (\ref{10}), (\ref{24}) is readily obtained.
\end{proof}
\end{lemma}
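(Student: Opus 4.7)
The plan is to establish the equivalence by a single tower-property projection and a block-matrix bookkeeping step based on the redefined quantities $B = [B^1 \ B^2]$, $R = \mathrm{diag}(R^1,R^2)$, and $u_k = [\hat{u}_k^{1\prime}\ u_k^{2\prime}]'$ from \eqref{13}--\eqref{15}. Equations \eqref{22} and \eqref{25} coincide verbatim with \eqref{5} and \eqref{8}, so no work is required there. The content of the lemma is the passage between \{\eqref{6},\eqref{7}\} and \{\eqref{23},\eqref{24}\}.

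For the forward direction, I would first project \eqref{6} onto the coarser filtration $\mathcal{F}\{Y_k^2\}$. Since $\mathcal{F}\{Y_k^2\}\subset\mathcal{F}\{Y_k\}$, the tower property applied to $\mathbb{E}[B^{1\prime}\lambda_k\mid\mathcal{F}\{Y_k\}]$ collapses the nested conditional expectation, and the definition $\hat{u}_k^1 = \mathbb{E}[u_k^1\mid\mathcal{F}\{Y_k^2\}]$ from \eqref{9} handles the control term, producing the auxiliary identity
\begin{align*}
0 = \mathbb{E}[B^{1\prime}\lambda_k\mid\mathcal{F}\{Y_k^2\}] + R^1\hat{u}_k^1.
\end{align*}
Stacking this auxiliary identity on top of \eqref{7} and reading the blocks through $B$, $R$, and $u_k$ recovers exactly \eqref{23}. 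Subtracting the same auxiliary identity from \eqref{6}, and invoking the decomposition $u_k^1 = \hat{u}_k^1 + \tilde{u}_k^1$ from \eqref{10} so that the $R^1\hat{u}_k^1$ terms cancel, yields \eqref{24}.

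For the reverse direction, I would simply undo the block stacking: the bottom block-row of \eqref{23} is literally \eqref{7}, while adding the top block-row of \eqref{23} to \eqref{24} restores \eqref{6} once we again use $u_k^1 = \hat{u}_k^1 + \tilde{u}_k^1$. Thus the two systems are equivalent under the redefinitions.

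I do not anticipate a substantive obstacle. The only care needed is measurability bookkeeping: checking that \eqref{6} is an equation among $\mathcal{F}\{Y_k\}$-measurable random variables (which it is, because $u_k^1$ is adapted to $\mathcal{F}\{Y_k\}$) so that conditioning on $\mathcal{F}\{Y_k^2\}$ is legitimate, and confirming that the block partition used to merge the two scalar costate conditions into \eqref{23} is consistent with the definitions of $B$, $u_k$, and $R$ introduced alongside the rewritten dynamics \eqref{13} and cost \eqref{15}.
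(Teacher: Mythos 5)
Your proposal is correct and follows essentially the same route as the paper's proof: project \eqref{6} onto $\mathcal{F}\{Y_k^2\}$ via the tower property and \eqref{9} to obtain the auxiliary identity \eqref{26}, stack it with \eqref{7} to get \eqref{23}, and subtract it from \eqref{6} using \eqref{10} to get \eqref{24}. Your explicit treatment of the reverse direction (unstacking the blocks and recombining) is a minor addition the paper leaves implicit, but the argument is the same.
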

Next we define the following coupled Riccati equations:
\begin{align}
P_k&=A'P_{k+1}A-M_k'\Upsilon_k^{-1}M_k+Q,\label{27}\\
S_k&=A'\Phi_{k+1}A-L_k^{0'}\Lambda_k^{-1}L_k+Q,\label{28}
\end{align}
where
\begin{align}
M_k&=B'P_{k+1}A,\label{29}\\
\Upsilon_k&=B'P_{k+1}B+R,\label{30}\\
\Phi_k&=(P_k-S_k)G^2_{k|k-1}H^2+S_k,\label{31}\\
L_k^0&=B^{1'}\Phi_{k+1}'A,\label{new27}\\
L_k&=B^{1'}\Phi_{k+1}A,\label{32}\\
\Lambda_k&=B^{1'}\Phi_{k+1}B^1+R^1,\label{33}
\end{align}
with terminal values $P_{N+1}=S_{N+1}=\Theta$ and $G^2_{k|k-1}$ defined in Lemma 1.

We now give the optimal controllers in the following theorem.
\begin{theorem}
Assuming that $\Upsilon_k$ and $\Lambda_k$ are invertible for $k=N,\ldots,0$, the optimal controllers for Problem 1 are given by
\begin{align}
u_k&=-\Upsilon_k^{-1}M_k\hat{x}_{k|k}^2\label{35},\\
\tilde{u}^1_k&=-\Lambda_k^{-1}L_k(\hat{x}^1_{k|k}-\hat{x}_{k|k}^2),\label{36}
\end{align}
where $\hat{x}^2_{k|k}$ and $\hat{x}^1_{k|k}$ are defined as in Lemma 1, and $\Upsilon_k, M_k, \Lambda_k, L_k$ are as in (\ref{27})-(\ref{33}). Accordingly, the optimal $u_k^1=\begin{bmatrix}I&0\end{bmatrix}u_k+\tilde{u}^1_k$, and the optimal $u^2_k=\begin{bmatrix}0&I\end{bmatrix}u_k$. The optimal cost function is as
\begin{align}
\nonumber J^*_N&=\mathbb{E}[x_0'P_0\hat{x}^2_{0|0}+x_0'S_0(\hat{x}^1_{0|0}-\hat{x}^2_{0|0})]+tr(\Sigma^1_{N+1|N+1}\Theta)\\
\nonumber&\quad+\hspace{-0.8mm}\sum_{k=0}^Ntr\{\Sigma^1_{k|k}[Q\hspace{-0.8mm}-\hspace{-0.8mm}A'(S_{k+1}\hspace{-0.8mm}-\hspace{-0.8mm}\Phi_{k+1}\hspace{-0.8mm}-S_{k+1}G_{k+1|k}H)\\
&\quad\quad\times A]\hspace{-0.8mm}+\hspace{-0.8mm}[Q_\omega(S_{k+1}G_{k+1|k}H\hspace{-0.8mm}+\hspace{-0.8mm}\Phi_{k+1}\hspace{-0.8mm}-\hspace{-0.8mm}S_{k+1})]\}\label{37}.
\end{align}
Moreover, the solution to the FBSDEs (\ref{13}) and (\ref{22}) is as
\begin{align}
\lambda_{k-1}=P_k\hat{x}^2_{k|k}+S_k(\hat{x}_{k|k}^1-\hat{x}^2_{k|k}).\label{34}
\end{align}
\end{theorem}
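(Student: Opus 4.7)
The plan is to verify the three claims by backward induction on $k$, using the proposed representation $\lambda_{k-1}=P_k\hat{x}^2_{k|k}+S_k(\hat{x}^1_{k|k}-\hat{x}^2_{k|k})$ as the inductive ansatz and the equivalent costate system (\ref{22})--(\ref{25}) from Lemma 2 as the driver. The base case is immediate: at $k=N$ we have $P_{N+1}=S_{N+1}=\Theta$, so the ansatz collapses to $\lambda_N=\Theta\hat{x}^1_{N+1|N+1}=\mathbb{E}[\Theta x_{N+1}\mid\mathcal{F}\{Y_{N+1}\}]$, which matches (\ref{25}).

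For the inductive step, I would first project the ansatz onto $\mathcal{F}\{Y^2_k\}$. By the tower property and $\mathbb{E}[\tilde{u}^1_k\mid\mathcal{F}\{Y^2_k\}]=0$ from (\ref{11}), both $\mathbb{E}[\hat{x}^2_{k+1|k+1}\mid\mathcal{F}\{Y^2_k\}]$ and $\mathbb{E}[\hat{x}^1_{k+1|k+1}\mid\mathcal{F}\{Y^2_k\}]$ equal $\hat{x}^2_{k+1|k}=A\hat{x}^2_{k|k}+Bu_k$, so the $S_{k+1}$ contribution cancels and (\ref{23}) reduces to $(B'P_{k+1}B+R)u_k=-B'P_{k+1}A\hat{x}^2_{k|k}$, which delivers (\ref{35}).

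Next I would project the ansatz onto $\mathcal{F}\{Y_k\}$. Here $y^2_{k+1}\notin\mathcal{F}\{Y_k\}$, so the innovation update (\ref{19}) together with $\mathbb{E}[y^2_{k+1}\mid\mathcal{F}\{Y_k\}]=H^2\hat{x}^1_{k+1|k}$ yields
\begin{align*}
\mathbb{E}[\hat{x}^2_{k+1|k+1}\mid\mathcal{F}\{Y_k\}]=\hat{x}^2_{k+1|k}+G^2_{k+1|k}H^2(\hat{x}^1_{k+1|k}-\hat{x}^2_{k+1|k}),
\end{align*}
whereas $\mathbb{E}[\hat{x}^1_{k+1|k+1}\mid\mathcal{F}\{Y_k\}]=\hat{x}^1_{k+1|k}$ by the tower property. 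Substituting these into (\ref{24}) and collecting the coefficient of $(\hat{x}^1_{k+1|k}-\hat{x}^2_{k+1|k})$ is precisely where $\Phi_{k+1}=(P_{k+1}-S_{k+1})G^2_{k+1|k}H^2+S_{k+1}$ emerges naturally: (\ref{24}) becomes $B^{1'}\Phi_{k+1}(\hat{x}^1_{k+1|k}-\hat{x}^2_{k+1|k})+R^1\tilde{u}^1_k=0$, and inserting $\hat{x}^1_{k+1|k}-\hat{x}^2_{k+1|k}=A(\hat{x}^1_{k|k}-\hat{x}^2_{k|k})+B^1\tilde{u}^1_k$ then yields (\ref{36}).

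With both gains determined, I would close the induction by computing $\lambda_{k-1}$ through (\ref{22}). Splitting $\mathbb{E}[Qx_k\mid\mathcal{F}\{Y_k\}]=Q\hat{x}^2_{k|k}+Q(\hat{x}^1_{k|k}-\hat{x}^2_{k|k})$ and inserting the closed-loop dynamics of the two estimators, the Riccati recursions (\ref{27})--(\ref{28}) drop out once one identifies $A'P_{k+1}B=M_k'$ and $A'\Phi_{k+1}B^1=L_k^{0'}$, thereby re-establishing the ansatz at index $k-1$. The main obstacle I anticipate is not the controller/costate step but the closed-form cost (\ref{37}): this requires a completion-of-squares and telescoping argument in which one iterates $\mathbb{E}[x_k'\lambda_{k-1}]$, carefully tracks the cross terms generated by the innovations $H(x_k-\hat{x}^1_{k|k-1})$ and $H^2(x_k-\hat{x}^2_{k|k-1})$, and accounts for the coupling between $\Sigma^1_{k|k}$ and $\Phi_{k+1}-S_{k+1}$. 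The bookkeeping is tedious but mechanical once the ansatz is in place.
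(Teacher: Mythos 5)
Your proposal is correct and follows essentially the same route as the paper's Appendix A: backward induction on the ansatz $\lambda_{k-1}=P_k\hat{x}^2_{k|k}+S_k(\hat{x}^1_{k|k}-\hat{x}^2_{k|k})$, extraction of $u_k$ and $\tilde{u}^1_k$ by projecting the costate onto $\mathcal{F}\{Y^2_k\}$ and $\mathcal{F}\{Y_k\}$ (which is exactly where $\Phi_{k+1}$ arises), and closure of the induction via the Riccati recursions (\ref{27})--(\ref{28}). The cost formula (\ref{37}) you only sketch, but the strategy you name --- telescoping the value function $V_k=\mathbb{E}[x_k'\lambda_{k-1}]=\mathbb{E}[x_k'P_k\hat{x}^2_{k|k}+x_k'S_k(\hat{x}^1_{k|k}-\hat{x}^2_{k|k})]$ with completion of squares --- is precisely the paper's computation.
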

\begin{proof}
See Appendix A.
\end{proof}
It should be emphasized that $\Sigma_{k+1|k}^2$ in (\ref{20}) is coupled with $\tilde{u}_k^1$. Noting the special form of (\ref{36}), and the relationship between (\ref{36}) and (\ref{20}), we succeed to decouple $\tilde{u}_k^1$ from $\Sigma_{k+1|k}^2$ in the following lemma.
\begin{lemma}
The estimation error covariance (\ref{20}) can be calculated as
\begin{align}
\nonumber\Sigma^2_{k+1|k}&=(A-B^1\Gamma_k)\big\{[(I\hspace{-0.8mm}-G^2_{k|k-1}H^2)\Sigma_{k|k-1}^2(I\hspace{-0.8mm}-G^2_{k|k-1}\\
                       &\quad\times H^2)'\hspace{-0.8mm}+G^2_{k|k-1}Q_{v^2}G^{2'}_{k|k-1}]-\hspace{-0.8mm}\Sigma^1_{k|k}\big\}(A-B^1\Gamma_k)'\nonumber\\
&\quad+A\Sigma^1_{k|k}A'+Q_\omega,\label{46}
\end{align}
where $\Gamma_k=\Lambda^{-1}_kL_k$, and $\Sigma^1_{k|k}$ can be calculated as in Lemma 1 iteratively.
\end{lemma}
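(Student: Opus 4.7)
The objective is to eliminate the explicit dependence on $\tilde u_k^1$ in the three cross-/quadratic terms of (\ref{20}) by substituting the optimal expression $\tilde u_k^1 = -\Gamma_k(\hat x^1_{k|k}-\hat x^2_{k|k})$ from Theorem~1 and collapsing the resulting second moments back onto quantities already produced in Lemma~1. My first step is to introduce the shorthand $e_k^i := x_k - \hat x^i_{k|k}$ for $i=1,2$, so that $\hat x^1_{k|k}-\hat x^2_{k|k} = e_k^2-e_k^1$ and hence $\tilde u_k^1 = -\Gamma_k(e_k^2-e_k^1)$. Each of the three troublesome terms in (\ref{20}) then becomes a linear combination of $\Sigma^1_{k|k}$, $\Sigma^2_{k|k}$, and the cross-covariance $\mathbb{E}[e_k^2 (e_k^1)']$.

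The crucial step is evaluating that cross-covariance. Since $\mathcal{F}\{Y_k^2\}\subset \mathcal{F}\{Y_k\}$, both $\hat x^1_{k|k}$ and $\hat x^2_{k|k}$ are $\mathcal{F}\{Y_k\}$-measurable, and the orthogonality of $e_k^1$ to $\mathcal{F}\{Y_k\}$ yields $\mathbb{E}[\hat x^1_{k|k}(e_k^1)']=\mathbb{E}[\hat x^2_{k|k}(e_k^1)']=0$. Writing $e_k^2 = (x_k-\hat x^1_{k|k}) + (\hat x^1_{k|k}-\hat x^2_{k|k})$ and multiplying by $(e_k^1)'$ on the right then gives the clean identity $\mathbb{E}[e_k^2 (e_k^1)'] = \Sigma^1_{k|k}$. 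Consequently $\mathbb{E}[(e_k^2-e_k^1)(e_k^2-e_k^1)'] = \Sigma^2_{k|k} - \Sigma^1_{k|k}$, while each of the two cross terms in (\ref{20}) reduces to $A(\Sigma^1_{k|k}-\Sigma^2_{k|k})\Gamma_k' B^{1'}$ or its transpose.

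The remaining step is a routine completion-of-squares: adding and subtracting $A\Sigma^1_{k|k}A'$ regroups the right-hand side of (\ref{20}) into $(A-B^1\Gamma_k)(\Sigma^2_{k|k}-\Sigma^1_{k|k})(A-B^1\Gamma_k)' + A\Sigma^1_{k|k}A' + Q_\omega$, and plugging in the explicit expression (\ref{21}) for $\Sigma^2_{k|k}$ inside the outer bracket produces exactly (\ref{46}). The only nontrivial ingredient is the cross-covariance identity $\mathbb{E}[e_k^2 (e_k^1)'] = \Sigma^1_{k|k}$; I expect this to be the main point that must be argued carefully, since it is precisely the nesting of $\mathcal{F}\{Y_k^2\}$ inside $\mathcal{F}\{Y_k\}$ that makes the coupling disappear and that ultimately frees the estimation error covariance from its dependence on the control.
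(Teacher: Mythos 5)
Your proposal is correct and follows essentially the same route as the paper's proof: substitute the optimal $\tilde u_k^1=-\Gamma_k(\hat x^1_{k|k}-\hat x^2_{k|k})$ into (\ref{20}), use the orthogonality of the error $x_k-\hat x^1_{k|k}$ to all $\mathcal{F}\{Y_k\}$-measurable quantities (which is exactly how the paper obtains (\ref{new16}) and (\ref{new17}), equivalent to your identity $\mathbb{E}[(x_k-\hat x^2_{k|k})(x_k-\hat x^1_{k|k})']=\Sigma^1_{k|k}$), complete the square to get $(A-B^1\Gamma_k)(\Sigma^2_{k|k}-\Sigma^1_{k|k})(A-B^1\Gamma_k)'+A\Sigma^1_{k|k}A'+Q_\omega$, and then insert (\ref{21}). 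No gaps.
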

\begin{proof}
It can be seen from (\ref{20}) that the coupled terms are $\mathbb{E}[A(x_{k}-\hat{x}^2_{k|k})\tilde{u}_{k}^{1'}B^{1'}]$,  $\mathbb{E}[B^1\tilde{u}^1_{k}(x_{k}-\hat{x}^2_{k|k})'A']$ and $\mathbb{E}[B^1\tilde{u}^1_{k}\tilde{u}^{1'}_{k}B^{1'}]$.
Firstly, note that
\begin{align}
\nonumber&\mathbb{E}[(\hat{x}^1_{k|k}-\hspace{-0.8mm}\hat{x}^2_{k|k})(x_{k}-\hat{x}^2_{k|k})']\\
\nonumber &=\mathbb{E}\{[(x_k-\hat{x}^2_{k|k})-(x_k-\hat{x}^1_{k|k})](x_{k}-\hat{x}^2_{k|k})'\}\\
\nonumber &=\Sigma^2_{k|k}-\mathbb{E}[(x_k-\hat{x}^1_{k|k})(x_{k}-\hat{x}^2_{k|k})']\\
\nonumber &=\Sigma^2_{k|k}-\mathbb{E}[(x_k-\hat{x}^1_{k|k})x_{k}]'\\
\nonumber &=\Sigma^2_{k|k}-\mathbb{E}\{(x_k-\hat{x}^1_{k|k})[(x_{k}-\hat{x}^1_{k|k})+\hat{x}^1_{k|k}]\}'\\
 &=\Sigma^2_{k|k}-\Sigma^1_{k|k},\label{new16}
\end{align}
and
\begin{align}
\nonumber&\mathbb{E}[(\hat{x}^1_{k|k}-\hspace{-0.8mm}\hat{x}^2_{k|k})(\hat{x}^1_{k|k}-\hspace{-0.8mm}\hat{x}^2_{k|k})']\\
\nonumber&=\mathbb{E}\{[(x_k\hspace{-0.8mm}-\hspace{-0.8mm}\hat{x}^2_{k|k})\hspace{-0.8mm}-\hspace{-0.8mm}(x_k\hspace{-0.8mm}-\hspace{-0.8mm}\hat{x}^1_{k|k})][(x_k-\hat{x}^2_{k|k})\hspace{-0.8mm}-\hspace{-0.8mm}(x_k-\hspace{-0.8mm}\hat{x}^1_{k|k})]'\}\\
\nonumber&=\Sigma^2_{k|k}-\mathbb{E}[(x_k-\hat{x}^2_{k|k})(x_k-\hat{x}^1_{k|k})']\\
\nonumber&\quad-\mathbb{E}[(x_k-\hat{x}^1_{k|k})(x_k-\hat{x}^2_{k|k})']+\Sigma^1_{k|k}\\
\nonumber&=\Sigma^2_{k|k}-\mathbb{E}[x_k(x_k-\hat{x}^1_{k|k})']-\mathbb{E}[(x_k-\hat{x}^1_{k|k})x_k']+\Sigma^1_{k|k}\\
&=\Sigma^2_{k|k}-\Sigma^1_{k|k}.\label{new17}
\end{align}
In virtue of  (\ref{36}), (\ref{new16}) and (\ref{new17}), (\ref{20}) can be written as
\begin{align}
\nonumber\Sigma^2_{k+1|k}&=(A-B^1\Gamma_k)(\Sigma_{k|k}^2-\Sigma^1_{k|k})(A-B^1\Gamma_k)'\nonumber\\
&\quad+A\Sigma^1_{k|k}A'+Q_\omega.\label{48}
\end{align}
Obviously, $\tilde{u}_k^1$ is decoupled from $\Sigma_{k+1|k}^2$. Substituting (\ref{21}) into (\ref{48}), it can be readily obtained that (\ref{46}) holds.
\end{proof}
\begin{remark}
Although $\tilde{u}_k^1$ is decoupled from $\Sigma_{k+1|k}^2$ via Lemma 3, it is noted that the estimation gain  is coupled with the control gain and the control gain is dependent on the estimation gain; see (\ref{46}) and (\ref{31}). Besides, Riccati equation (\ref{46}) for the estimation gain is forward and Riccati equation (\ref{28}) for the control gain is backward which implies that the forward and backward Riccati equations (\ref{46}) and (\ref{28}) cannot be solved simultaneously. In other words, the estimators and controllers cannot be calculated.
\end{remark}
To address this problem, we now consider steady-state solutions for the Kalman filtering and control problem. To this end, it is necessary to make the following assumption:
\begin{assumption}
The solutions of the coupled Riccati equations (\ref{27}) and (\ref{28}) converge to $P$ and $S$ respectively and (\ref{46}) converges to $\Sigma^2$ when $k$ and $N$ are large enough.
\end{assumption}
Under Assumption 1 and ($A,H$) is detectable, it is easy to know that $P$ and $S$ satisfy the following two algebraic Riccati equations:
\begin{align}
 P&=A'PA-M'\Upsilon^{-1}M+Q, \label{new10}\\
 S&=A'\Phi A-L^{0'}\Lambda^{-1}L+Q,\label{new11}
\end{align}
where
\begin{align}
\nonumber M&=B'PA,\\
\nonumber \Upsilon&=B'PB+R,\\
\nonumber \Phi&=(P-S)G^2H^2+S,\\
\nonumber L^0&=B^{1'}\Phi'A,\\
\nonumber L&=B^{1'}\Phi A,\\
\nonumber \Lambda&=B^{1'}\Phi B^1+R^1,
\end{align}
with
\begin{align}
\nonumber G^2&=\Sigma^2H^{2'}(H^2\Sigma^2H^{2'}+Q_{v^2})^{-1},\\
\nonumber\Sigma^2\hspace{-0.8mm}&=\hspace{-0.8mm}(A\hspace{-0.8mm}-\hspace{-0.8mm}B^1\Lambda^{-1} L)\big\{\hspace{-0.8mm}[(I\hspace{-0.8mm}-\hspace{-0.8mm}G^2H^2)\Sigma^2(I\hspace{-0.8mm}-\hspace{-0.8mm}G^2H^2)'\hspace{-0.8mm}+\hspace{-0.8mm}G^2Q_{v^2}G^{2'}]\\
&\quad-\tilde{\Sigma}^1\big\}(A-B^1\Lambda^{-1} L)'+A\tilde{\Sigma}^1A'+Q_\omega,\label{new26}\\
\nonumber\tilde{\Sigma}^1&=(I-GH)\Sigma^1(I-GH)'+GQ_{v}G^{'},\\
\nonumber G&=\Sigma^1H^{'}(H\Sigma^1H^{'}+Q_{v})^{-1},\\
\nonumber\Sigma^1&=A\tilde{\Sigma}^1A'+Q_\omega,
\end{align}
Now we shall calculate $P$ and $S$ in (\ref{new10}) and (\ref{new11}). By applying  (\ref{new10}) and (\ref{new11}), in order to save the number of symbols, we use the same symbols as in (\ref{27}) and (\ref{28}) to define the two forward iterations as:
\begin{align}
P_{k+1}&=A'P_{k}A-M_k'\Upsilon_k^{-1}M_k+Q,\label{new1}\\
S_{k+1}&=A'\Phi_{k}A-L_k^{0'}\Lambda_k^{-1}L_k+Q,\label{new2}
\end{align}
where
\begin{align}
\nonumber M_k&=B'P_{k}A,\\
\nonumber\Upsilon_k&=B'P_{k}B+R,\\
\nonumber\Phi_k&=(P_k-S_k)G^2_{k|k-1}H^2+S_k,\\
\nonumber L_k^0&=B^{1'}\Phi_{k}'A,\\
\nonumber L_k&=B^{1'}\Phi_{k}A,\\
\nonumber\Lambda_k&=B^{1'}\Phi_{k}B^1+R^1,
\end{align}
with
\begin{align}
G^2_{k|k-1}&=\Sigma^2_{k|k-1}H^{2'}(H^2\Sigma^2_{k|k-1}H^{2'}+Q_{v^2})^{-1},\nonumber\\
\nonumber\Sigma^2_{k+1|k}&=(A-B^1\Lambda^{-1}_kL_k)\big\{[(I\hspace{-0.8mm}-G^2_{k|k-1}H^2)\Sigma_{k|k-1}^2(I\hspace{-0.8mm}\\
                       &\quad-G^2_{k|k-1} H^2)'\hspace{-0.8mm}+G^2_{k|k-1}Q_{v^2}G^{2'}_{k|k-1}]-\hspace{-0.8mm}\Sigma^1_{k|k}\big\}\nonumber\\
&\quad\times(A-B^1\Lambda^{-1}_kL_k)'+A\Sigma^1_{k|k}A'+Q_\omega,\label{new50}\\
\nonumber\Sigma^1_{k|k}&=(I-G_{k|k-1}H)\Sigma_{k|k-1}^1(I-G_{k|k-1}H)'\\
\nonumber              &\quad+G_{k|k-1}Q_{v}G^{'}_{k|k-1},\\
\nonumber G_{k|k-1}&=\Sigma^1_{k|k-1}H^{'}(H\Sigma^1_{k|k-1}H^{'}+Q_{v})^{-1},\\
\nonumber\Sigma^1_{k|k-1}&=A\Sigma_{k-1|k-1}^1A'+Q_\omega,
\end{align}
with initial values $P_0=S_0=\delta I$ ($\delta>0$) and  $\Sigma_{0|-1}^1=\Sigma_{0|-1}^2=\sigma$.
It is clear that (\ref{new1}) and (\ref{new2}) are standard forward iterations which are different from (\ref{27}) and (\ref{28}) where the iterations are infeasible. Suppose (\ref{new1}), (\ref{new2}) and (\ref{new50}) are convergent when $k$ is large enough. Then the solutions would be the same as those of (\ref{27}), (\ref{28}) and (\ref{46}).

Now we are in the position to present the optimal controllers in the following theorem.
\begin{theorem}
Suppose that ($A,H$) is detectable, (\ref{new1}), (\ref{new2}) converge to $P$ and $S$, and (\ref{new50}) converges to $\Sigma^2$, where $P\geq0$, $S$ and $\Sigma^2$ have the same values as in (\ref{new10}), (\ref{new11}) and (\ref{new26}),  when the iteration $k$ is large enough. Then the optimal controllers are given as
\begin{align}
u_k&=-\Upsilon^{-1}M\hat{x}_{k|k}^2\label{new8},\\
\tilde{u}^1_k&=-\Lambda^{-1}L(\hat{x}^1_{k|k}-\hat{x}_{k|k}^2).\label{new9}
\end{align}
Furthermore, the above optimal controllers also make the system (\ref{13}) bounded in the mean-square sense.

Accordingly, the steady-state kalman filters are as
\begin{align}
\hat{x}^1_{k|k}&=\hat{x}^1_{k|k-1}+G(y_k-H\hat{x}^1_{k|k-1}),\label{new12}\\
\hat{x}^1_{k|k-1}&=A\hat{x}^1_{k-1|k-1}+Bu_{k-1}+B^L\tilde{u}_{k-1}^L,\label{new24}
\end{align}
and
\begin{align}
\hat{x}^2_{k|k}&=\hat{x}^2_{k|k-1}+G^2(y_k^2-H^2\hat{x}^2_{k|k-1}),\label{new13}\\
\hat{x}^2_{k|k-1}&=A\hat{x}^2_{k-1|k-1}+Bu_{k-1}.\label{new25}
\end{align}
\end{theorem}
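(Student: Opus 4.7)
The plan is to derive Theorem 2 as the stationary limit of Theorem 1 and then verify mean-square boundedness of the closed-loop process. First, under Assumption 1 and the detectability of $(A,H)$ (which yields convergence of the C1 Kalman covariance $\Sigma^1_{k|k-1}$ via standard Kalman theory), the forward iteration (\ref{new1})-(\ref{new2}) together with (\ref{new50}) form a coupled recursion whose fixed points are precisely the algebraic equations (\ref{new10}), (\ref{new11}), (\ref{new26}). Since the backward iteration (\ref{27})-(\ref{28}) also converges by Assumption 1, both iterations land at the same fixed point, justifying the identification of the steady-state gains $\Upsilon, M, \Lambda, L, G^2, G$. Passing to the limit $N\to\infty$ in the finite-horizon optimal controllers (\ref{35})-(\ref{36}) of Theorem 1 then yields (\ref{new8})-(\ref{new9}); optimality in the infinite horizon follows from a standard limit argument, since the stationary policies are pointwise limits of finite-horizon optimal policies and thus achieve the infimum of the infinite-horizon cost whenever that infimum is finite, which is exactly the mean-square boundedness claim to be established next.

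For the stability part I would decompose the state as $x_k = \hat{x}^2_{k|k} + e^2_k$ with $e^2_k = x_k - \hat{x}^2_{k|k}$ and also track $e^1_k = x_k - \hat{x}^1_{k|k}$. By Lemma 1 and detectability of $(A,H)$, $\mathbb{E}[e^1_k(e^1_k)^\prime]$ is uniformly bounded; by Lemma 3 combined with the convergence of (\ref{new50}) in Assumption 1, $\mathbb{E}[e^2_k(e^2_k)^\prime]$ is also uniformly bounded. Substituting the stationary controllers (\ref{new8})-(\ref{new9}) into (\ref{new13}) and (\ref{new25}), the closed-loop recursion for $\hat{x}^2_{k|k}$ has system matrix $A-B\Upsilon^{-1}M$, driven by the C2 innovation (zero-mean with bounded variance) plus a disturbance proportional to $\hat{x}^1_{k|k}-\hat{x}^2_{k|k} = e^2_k - e^1_k$, which is $L^2$-bounded by the preceding step. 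Standard discrete-time LQR theory applied to the algebraic Riccati equation (\ref{new10}), under stabilizability of $(A,B)$ implicit in Assumption 1, ensures $A-B\Upsilon^{-1}M$ is Schur stable with $P$ as Lyapunov matrix, so $\mathbb{E}[\hat{x}^2_{k|k}(\hat{x}^2_{k|k})^\prime]$ stays uniformly bounded and mean-square boundedness of $x_k$ follows.

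The main obstacle is the non-symmetric character of $\Phi$ (so that $L^0 \neq L$ in general) and the coupling between the control gain and the estimation gain through (\ref{31}) and (\ref{new50}): this complicates both the convergence analysis of the coupled iterations and the verification that the closed-loop matrix is stabilizing, since the $S$-equation is not a standard symmetric Riccati. I expect this is best handled by a joint Lyapunov function of the form $V_k = \mathbb{E}\bigl[(\hat{x}^2_{k|k})^\prime P \hat{x}^2_{k|k} + (e^2_k - e^1_k)^\prime S (e^2_k - e^1_k)\bigr]$, for which the algebraic identities (\ref{new10})-(\ref{new11}) yield an increment of the form $V_{k+1}-V_k \leq -\alpha V_k + \beta$ with some $\alpha > 0$ and constant $\beta$ absorbing the noise covariances and residual cross terms, thereby closing the mean-square boundedness argument and completing the proof of Theorem 2.
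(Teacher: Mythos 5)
Your overall skeleton matches the paper's: identify the steady-state gains from the convergent coupled iterations, take the stationary versions of the Theorem 1 controllers and filters, and reduce mean-square boundedness of (\ref{13}) to (i) boundedness of the estimation error terms and (ii) stability of the closed-loop matrix $A-B\Upsilon^{-1}M$. For (i) the paper proceeds essentially as you do: it substitutes (\ref{new8})--(\ref{new9}) into (\ref{13}), writes $x_{k+1}=(A-B\Upsilon^{-1}M)x_k+B\Upsilon^{-1}M(x_k-\hat{x}^2_{k|k})-B^1\Lambda^{-1}L(\hat{x}^1_{k|k}-\hat{x}^2_{k|k})+\omega_k$, and uses the cross-covariance identities (\ref{new15})--(\ref{new17}) to show every non-homogeneous contribution to $\mathbb{E}(x_{k+1}'x_{k+1})$ is a trace of the convergent covariances $\Sigma^1_{k|k},\Sigma^2_{k|k}$; your decomposition $x_k=\hat{x}^2_{k|k}+e^2_k$ is equivalent bookkeeping of the same facts, and your treatment of the controller limit is at the same level of rigor as the paper's ``similar procedure as in Theorem 1.''

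The genuine weak point is step (ii). You assert that ``standard discrete-time LQR theory, under stabilizability of $(A,B)$ implicit in Assumption 1,'' makes $A-B\Upsilon^{-1}M$ Schur. Stabilizability of $(A,B)$ is not implicit in the hypotheses (only detectability of $(A,H)$, convergence of the iterations, and $P\geq0$ are assumed), and convergence of (\ref{new1}) to some $P\geq0$ does not by itself identify $P$ as the stabilizing ARE solution, so that citation does not close the argument as written. The paper instead gives a self-contained argument: it rewrites (\ref{new10}) in closed-loop form $P=(A-B\Upsilon^{-1}M)'P(A-B\Upsilon^{-1}M)+M'\Upsilon^{-1}R\Upsilon^{-1}M+Q$ (its (\ref{new20})), evaluates $\tilde V_k=\mathbb{E}(\alpha_k'P\alpha_k)$ along the homogeneous system (\ref{new19}), shows $\tilde V_k$ is nonincreasing and bounded below, and telescopes to conclude $\mathbb{E}[\alpha_n'(M'\Upsilon^{-1}R\Upsilon^{-1}M+Q)\alpha_n]\to0$, hence mean-square stability (implicitly using positive definiteness of that weight). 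Your fallback joint Lyapunov function $V_k=\mathbb{E}[(\hat{x}^2_{k|k})'P\hat{x}^2_{k|k}+(e^2_k-e^1_k)'S(e^2_k-e^1_k)]$ is both unnecessary and unsound: unnecessary because the error terms are already controlled by the assumed convergence of (\ref{new50}) and of the C1 filter, and unsound because $S$ solves the nonsymmetric equation (\ref{new11}) (indeed $L^0\neq L$, as you note) and is never shown to be symmetric or positive semidefinite, so it cannot serve as a Lyapunov weight. Replace this step by the paper's $P$-based Lyapunov/telescoping argument (or add explicit stabilizability/detectability hypotheses) and the rest of your outline goes through.
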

\begin{proof}
The proof of Theorem 2 is put into Appendix B.
\end{proof}
\begin{remark}
Although (\ref{new8}) and (\ref{new9}) are suboptimal in finite horizon , under the assumptions in Theorem 2, (\ref{new8}) and (\ref{new9}) are optimal for the infinite-horizon case. In other words, when the iteration $k$ is large enough, (\ref{new8}) and (\ref{new9}) are optimal for the finite-horizon case either.
\end{remark}
\section{Numerical Examples}
Consider the system (\ref{3}), (\ref{14}), (\ref{13}) and the cost function (\ref{15}) with
\begin{align}
\nonumber&A=2.7,B^1=1.2,B=\begin{bmatrix}1.2&1.1\end{bmatrix},\\
\nonumber&H^2=1.1,H=\begin{bmatrix}1.2&1.1\end{bmatrix},Q=\hspace{-0.8mm}R^1=\hspace{-0.8mm}1,R=\begin{bmatrix}1&0\\0&1\end{bmatrix}\\
\nonumber&Q_{v^2}=Q_\omega=1,Q_v=\begin{bmatrix}1&0\\0&1\end{bmatrix},\mu=0,\sigma=1.
\end{align}
The initial values for forward Riccati equations (\ref{new1}), (\ref{new2}) and (\ref{new50}) are given by
\begin{align}
\Sigma_{0|-1}^1=\Sigma_{0|-1}^2=0.1,P_0=S_0=3\nonumber.
\end{align}
Direct calculation yields
\begin{align}
\nonumber &P_0=3,\Upsilon_0=\begin{bmatrix}5.3200&3.9600\\3.9600&4.6300\end{bmatrix},M_0=\begin{bmatrix}4.6800\\4.2900\end{bmatrix},\\
\nonumber &P_1=3.4436,\Upsilon_1=\begin{bmatrix}5.9587&4.5455\\4.5455&5.1667\end{bmatrix},M_1=\begin{bmatrix}11.1572\\10.2274\end{bmatrix},\\
\nonumber &P_2=3.4793,\Upsilon_2=\begin{bmatrix}6.0101&4.5926\\4.5926&5.2099\end{bmatrix},M_2=\begin{bmatrix}11.2728\\10.3334\end{bmatrix},\\
\nonumber &P_3=3.4818,\Upsilon_3=\begin{bmatrix}6.0138&4.5959\\4.5959&5.2129\end{bmatrix},M_3=\begin{bmatrix}11.2809\\10.3409\end{bmatrix},\\
\nonumber&S_0=3,\Lambda_0=5.3200,L_0=4.6800,\Phi_0=3,\\
\nonumber&S_1=5.1109,\Lambda_1=6.5855,L_1=12.5673,\Phi_1=3.8788,\\
\nonumber&S_2=5.2938,\Lambda_2=6.5018,L_2=12.3790,\Phi_2=3.8207,\\
\nonumber&S_3=5.2839,\Lambda_3=6.4046,L_3=12.1603,\Phi_3=3.7532.
\end{align}
By applying the obtained values to Theorem 2, the optimal controllers can be calculated as
\begin{align}
\nonumber &u_0=\begin{bmatrix}-0.1443\\-0.1323\end{bmatrix},u_1=\begin{bmatrix}-2.1685\\-1.9878\end{bmatrix},u_2=\begin{bmatrix}-1.1871\\ -1.0882\end{bmatrix},\\
\nonumber&u_3=\begin{bmatrix}-9.7144\\-8.9049\end{bmatrix},\tilde{u}_0^L=1.5714,\tilde{u}_1^L=0.9316,\\
\nonumber&\tilde{u}_2^L=11.3050,\tilde{u}_3^L=-10.1792.
\end{align}
Due to space limitation, we omit the rest values for $k=4,\ldots,N$. We draw the trajectories of $P_k$ and $S_k$ in Fig. 2. As can be seen from Fig.2, $P_k$ and $S_k$ converge to fixed values. Accordingly, the regulated states are drawn in Fig. 3. From Fig.3, it can be seen that states under the control of (\ref{new8}) and (\ref{new9}) are bounded in the mean square sense and have better performance than the case without the control.

As can be seen from Fig. 4, the two estimators can be asymptotically stable which means that the proposed algorithm in Theorem 2 also makes estimators stable. Besides, estimator 1 has the better performance than estimator 2 and the estimator 1 converges faster than estimator 2. From Fig. 5, estimated values of estimator 1 is closer to the true values than those of estimator 2.
\begin{figure}[htbp]
  \begin{center}
  \includegraphics[width=0.41\textwidth]{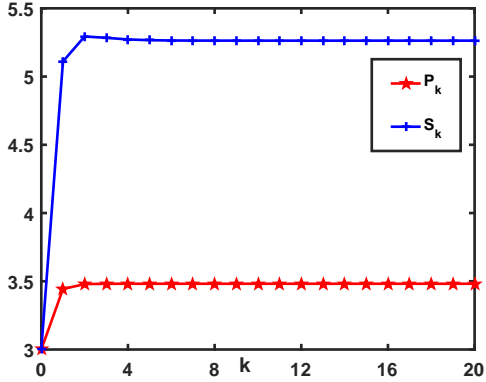}
  \caption{Trajectories of $P_k$ and $S_k$.} \label{fig:digit}
  \end{center}
\end{figure}
\begin{figure}[htbp]
  \begin{center}
  \includegraphics[width=0.41\textwidth]{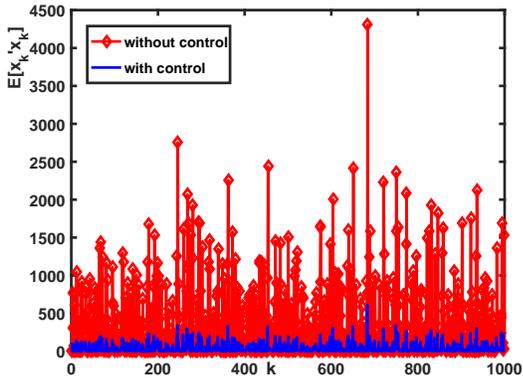}
  \caption{Comparisons of states with control and without control.} \label{fig:digit}
  \end{center}
\end{figure}
\begin{figure}[htbp]
  \begin{center}
  \includegraphics[width=0.41\textwidth]{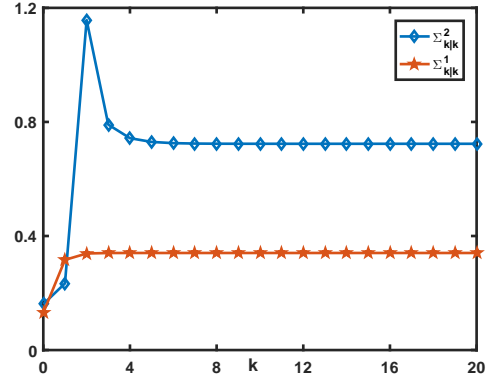}
  \caption{The performance of estimation error covariances $\Sigma_{k|k}^2$ and $\Sigma_{k|k}^1$.} \label{fig:digit}
  \end{center}
\end{figure}
\begin{figure}[htbp]
  \begin{center}
  \includegraphics[width=0.41\textwidth]{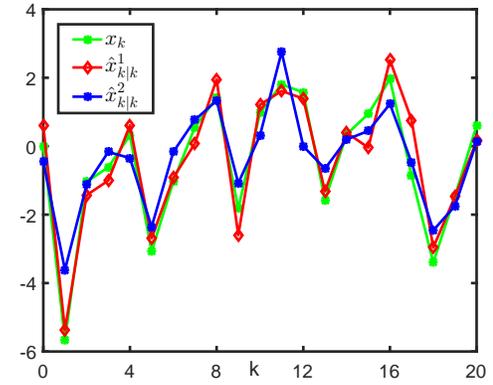}
  \caption{The comparisons of $x_k$, $\hat{x}_{k|k}^1$ and $\hat{x}_{k|k}^2$.} \label{fig:digit}
  \end{center}
\end{figure}
\section{Conclusion}
This paper investigates the optimal decentralized control for NCSs with asymmetric information and partial history sharing, where C2 shares its observations and partial history control with C1 and C1 does not share information with C2. Based on this NCSs model, the optimal estimators for C1 and C2 are presented by using asymmetric observations. It is noted that the EEC is coupled with the controller. Through Pontryagin's maximum principle, the solution to the FBSDEs is given. By making use of this solution, the optimal controllers are shown in terms of two coupled Riccati equations. Combining the optimal controllers with the EEC, the controller is decoupled from the EEC. It should be stressed that the control gain is dependent on the estimation gain. Furthermore, the control gain satisfies the backward Riccati equation and the estimation gain is related to the forward Riccati equation. We propose a iterative method to compute solutions to the coupled forward and backward Riccati equations. Numerical examples are given to show the effectiveness of the proposed algorithm and the stability of estimators.
\appendices
\section{Proof of Theorem 1}
\begin{proof}
Suppose that $\Upsilon_k$ and $\Lambda_k$ are invertible for $k=N,\ldots,0$, We shall show by mathematical induction that the optimal controllers are as (\ref{35}), (\ref{36}) and $\lambda_{k-1}$ has the form of (\ref{34}). From (\ref{25}) and $P_{N+1}=S_{N+1}=\Theta$, it is readily obtained that (\ref{34}) is valid for $k=N+1$.

For $k=N$, using (\ref{13}), (\ref{25}) and (\ref{11}), we have (\ref{23}) as
\begin{align}
0&=\mathbb{E}[B'\Theta x_{N+1}|\mathcal{F}\{Y^2_{N}\}]+Ru_N\nonumber\\
 &=B'\Theta(A\hat{x}^2_{N|N}+Bu_N)+Ru_N\nonumber.
\end{align}
Then with (\ref{29}) and (\ref{30}), the optimal $u_N$ is as
\begin{align}
u_N=-\Upsilon_N^{-1}M_N\hat{x}^2_{N|N},\label{38}
\end{align}
which means that (\ref{35}) holds for $k=N$. By making use of (\ref{13}), (\ref{25}) and (\ref{12}), (\ref{24}) becomes
\begin{align}
\nonumber0&=\mathbb{E}[B^{1'}\Theta x_{N+1}|\mathcal{F}\{Y_{N}\}]-\mathbb{E}[B^{1'}\Theta x_{N+1}|\mathcal{F}\{Y^2_{N}\}]\\
\nonumber&\quad+R^1\tilde{u}^1_N\\
\nonumber&=B^{1'}\Theta A(\hat{x}^1_{N|N}-\hat{x}^2_{N|N})+(B^{1'}\Theta B^1+R^1)\tilde{u}^1_N.
\end{align}
Thus, by virtue of (\ref{32}) and (\ref{33}), the optimal $\tilde{u}^1_N$ is as
\begin{align}
\tilde{u}^1_N=-\Lambda^{-1}_NL_N(\hat{x}^1_{N|N}-\hat{x}^2_{N|N}),\label{39}
\end{align}
which implies that (\ref{36}) is valid for $k=N$.
Using (\ref{13}), (\ref{38}), (\ref{39}) and (\ref{12}), (\ref{22}) becomes
\begin{align}
\lambda_{N}&=\mathbb{E}[A'\Theta x_{N+1}+Qx_N|\mathcal{F}\{Y_{N}\}]\nonumber\\
           &=A'\Theta(A\hat{x}^1_{N|N}+Bu_N+B^1\tilde{u}^1_N)+Q\hat{x}^1_{N|N}\nonumber\\
           &=(A'\Theta A-M_N'\Upsilon_N^{-1}M_N+Q)\hat{x}^2_{N|N}\nonumber\\
           &\quad+(A'\Theta A-L_N'\Lambda_N^{-1}L_N+Q)(\hat{x}^1_{N|N}-\hat{x}^2_{N|N})\nonumber
\end{align}
With (\ref{27}) and (\ref{28}), it can be obtained that (\ref{34}) holds for $k=N$.

Next following the proof of the mathematical induction, we choose any $l$ for $0\leq l\leq N$. Suppose that $\lambda_{k-1}$, $u_k$ and $\tilde{u}_k^1$ take forms of (\ref{34}), (\ref{35}) and (\ref{36}) respectively for all $k\geq l+1$. Now we will prove that (\ref{34}), (\ref{35}) and (\ref{36}) are valid for $k=l$. Firstly, we shall make the following preparatory work.

Noting (\ref{19}), (\ref{11}) and (\ref{13}), we have that
\begin{align}
\nonumber\hat{x}^2_{k+1|k+1}&=A\hat{x}^2_{k|k}+Bu_{k}+G^{2}_{k+1|k}[H^2(Ax_k+B^1\tilde{u}^1_k\\
&\quad+\omega_k-A\hat{x}^2_{k|k})+v^2_{k+1}].\label{40}
\end{align}
Using (\ref{16}), (\ref{19}), (\ref{11}), (\ref{12}), (\ref{13}) and (\ref{40}), it yields that
\begin{align}
\nonumber &\hat{x}^1_{k+1|k+1}-\hat{x}^2_{k+1|k+1}\\
          &=A(\hat{x}^1_{k|k}\hspace{-0.8mm}-\hspace{-0.8mm}\hat{x}^2_{k|k})\hspace{-0.8mm}+\hspace{-0.8mm}B^1\tilde{u}^1_k\hspace{-0.8mm}+\hspace{-0.8mm}G_{k+1|k}[H(Ax_k\hspace{-0.8mm}+\hspace{-0.8mm}\omega_k\hspace{-0.8mm}-\hspace{-0.8mm}A\hat{x}^1_{k|k})\hspace{-0.8mm}\nonumber\\
          &\quad+\hspace{-0.8mm}v_{k+1}]\hspace{-0.8mm}-\hspace{-0.8mm}G^2_{k+1|k}[H^2(Ax_k\hspace{-0.8mm}+\hspace{-0.8mm}B^1\tilde{u}^1_k\hspace{-0.8mm}+\hspace{-0.8mm}\omega_k\hspace{-0.8mm}-\hspace{-0.8mm}A\hat{x}^2_{k|k})+v^2_{k+1}].\label{41}
\end{align}
Since (\ref{34}) holds for $k\geq l+1$, for $k=l+1$, we have
\begin{align}
\lambda_{l}=P_{l+1}\hat{x}^2_{l+1|l+1}+S_{l+1}(\hat{x}^1_{l+1|l+1}-\hat{x}^2_{l+1|l+1}).\label{42}
\end{align}

Using (\ref{40}), (\ref{41}), (\ref{42}) and (\ref{11}), (\ref{23}) becomes
\begin{align}
0&=\mathbb{E}\{B'\lambda_{l}|\mathcal{F}\{Y^2_l\}\}+Ru_l\nonumber\\
\nonumber&=B'P_{l+1}A\hat{x}^2_{l|l}+(B'P_{l+1}B+R)u_l.
\end{align}
With (\ref{29}) and (\ref{30}), the optimal $u_l$ is as
\begin{align}
u_l=-\Upsilon_l^{-1}M_l\hat{x}^2_{l|l}.\label{43}
\end{align}
Thus, (\ref{35}) is valid for $k=l$. In virtue of (\ref{40}), (\ref{41}), (\ref{42}), (\ref{11}) and (\ref{12}), (\ref{24}) can be calculated as
\begin{align}
\nonumber0&=\mathbb{E}[B^{1'}\lambda_l|\mathcal{F}\{Y_l\}]-\mathbb{E}[B^{1'}\lambda_l|\mathcal{F}\{Y^2_l\}]+R^1\tilde{u}^1_l\\
\nonumber &=B^{1'}P_{l+1}[A\hat{x}^2_{l|l}+Bu_l+G^2_{l+1|l}H^2(A\hat{x}^1_{l|l}+B^1\tilde{u}^1_l\\
\nonumber&\quad-A\hat{x}^2_{l|l})]+B^{1'}S_{l+1}\{A(\hat{x}^1_{l|l}-\hat{x}^2_{l|l})+B^1\tilde{u}^1_l\\
\nonumber&\quad+G_{l+1|l}H(A\hat{x}^1_{l|l}-A\hat{x}^1_{l|l})\\
\nonumber&\quad-G^2_{l+1|l}H^2(A\hat{x}^1_{l|l}+\hspace{-0.8mm}B^1\tilde{u}^1_l-A\hat{x}^2_{l|l})\}\hspace{-0.8mm}-\hspace{-0.8mm}B^{1'}P_{l+1}\{A\hat{x}^2_{l|l}\\
\nonumber&\quad+Bu_l+G^2_{l+1|l}H^2(A\hat{x}^2_{l|l}-A\hat{x}^2_{l|l})\}+R^1\tilde{u}_l^1\\
\nonumber&=\{B^{1'}[(P_{l+1}-S_{l+1})G^2_{l+1|l}H^2+S_{l+1}]B^1+R^1\}\tilde{u}^1_{l}\\
\nonumber&\quad+B^{1'}[(P_{l+1}-S_{l+1})G^2_{l+1|l}H^2+S_{l+1}]A(\hat{x}^1_{l|l}-\hat{x}^2_{l|l}).
\end{align}
With (\ref{31})-(\ref{33}), the optimal $\tilde{u}^1_l$ is as
\begin{align}
\tilde{u}^1_l&=-\Lambda_l^{-1}L_l(\hat{x}^1_{l|l}-\hat{x}_{l|l}^2),\label{44}
\end{align}
which means that (\ref{36}) stands for $k=l$.
Now we shall prove that (\ref{34}) holds for $k=l$. By making use of (\ref{40}), (\ref{41}), (\ref{42}), (\ref{12}), (\ref{43}) and (\ref{44}), (\ref{22}) becomes
\begin{align}
\nonumber \lambda_{l-1}&=\mathbb{E}[A'\lambda_l+Qx_l|\mathcal{F}\{Y_l\}]\\
\nonumber              &=A'P_{l+1}[A\hat{x}^2_{l|l}\hspace{-0.8mm}+\hspace{-0.8mm}Bu_l\hspace{-0.8mm}+\hspace{-0.8mm}G^2_{l+1|l}H^2(A\hat{x}^1_{l|l}\hspace{-0.8mm}+B^1\tilde{u}^1_l\\
\nonumber&\quad-A\hat{x}^2_{l|l})]+A'S_{l+1}\{[A(\hat{x}^1_{l|l}-\hat{x}^2_{l|l})+B^1\tilde{u}^1_l\\
\nonumber&\quad+G_{l+1|l}H(A\hat{x}^1_{l|l}-A\hat{x}^1_{l|l})]-G^2_{l+1|l}H^2(A\hat{x}^1_{l|l}\\
\nonumber&\quad+B^1\tilde{u}^1_l-A\hat{x}^2_{l|l})\}+Q\hat{x}^1_{l|l}\\
\nonumber&=(A'P_{l+1}A-A'P_{l+1}B\Upsilon_l^{-1}M_l+Q)\hat{x}^2_{l|l}\\
\nonumber&\quad+\{A'[(P_{l+1}-S_{l+1})G^2_{l+1|l}H^2+S_{l+1}]A-A'[(P_{l+1}\\
\nonumber&\quad-S_{l+1})G^2_{l+1|l}H^2+S_{l+1}]B^1\Lambda^{-1}_lL_l+Q\}(\hat{x}^1_{l|l}-\hat{x}^2_{l|l}).
\end{align}
In virtue of (\ref{27}), (\ref{28}), (\ref{29}) and (\ref{31}), it can be obtained that (\ref{34}) holds for $k=l$.

 Next we shall show that the optimal cost is as (\ref{37}). To this end, by making use of the solution (\ref{34}) to the FBSDEs, we define the following value function
\begin{align}
V_k=\mathbb{E}[x_k'P_k\hat{x}^2_{k|k}+x_k'S_k(\hat{x}^1_{k|k}-\hat{x}^2_{k|k})].\label{45}
\end{align}
Before proceeding the following proof, by applying the orthogonality principle, we shall firstly give some preliminary work:
\begin{align}
\nonumber&\mathbb{E}[(x_k-\hat{x}_{k|k}^2)'\Pi(x_k-\hat{x}_{k|k})]\\
\nonumber&=\mathbb{E}\{[(x_k-\hat{x}_{k|k}^1)+(\hat{x}_{k|k}^1-\hat{x}_{k|k}^2)]'\Pi[(x_k-\hat{x}_{k|k}^1)+(\hat{x}_{k|k}^1\\
\nonumber&\quad\quad-\hat{x}_{k|k}^2)]\}\\
&=tr[\Sigma_{k|k}^1\Pi]+\mathbb{E}[(\hat{x}_{k|k}^1-\hat{x}_{k|k}^2)'\Pi(\hat{x}_{k|k}^1-\hat{x}_{k|k}^2)],\label{49}
\end{align}
where $\Pi$ is a known matrix with appropriate dimension.
\begin{align}
\nonumber&\mathbb{E}[x_k'\Pi(\hat{x}_{k|k}^1-\hat{x}_{k|k}^2)]\\
\nonumber&=\mathbb{E}\{[(x_k-\hat{x}_{k|k}^1)+(\hat{x}_{k|k}^1-\hat{x}_{k|k}^2)+\hat{x}_{k|k}^2]'\Pi(\hat{x}_{k|k}^1-\hat{x}_{k|k}^2)\\
\nonumber&=\mathbb{E}[(x_k-\hat{x}_{k|k}^1)'\Pi(\hat{x}_{k|k}^1-\hat{x}_{k|k}^2)]\\
\nonumber&\quad+\mathbb{E}[(\hat{x}_{k|k}^1-\hat{x}_{k|k}^2)'\Pi(\hat{x}_{k|k}^1-\hat{x}_{k|k}^2)]\\
         &=\mathbb{E}[(\hat{x}_{k|k}^1-\hat{x}_{k|k}^2)'\Pi(\hat{x}_{k|k}^1-\hat{x}_{k|k}^2)].\label{50}
\end{align}
In virtue of (\ref{49}), we have
\begin{align}
\nonumber&\mathbb{E}(x_k'\Pi\hat{x}_{k|k}^2)\\
\nonumber&=\mathbb{E}\{x_k'\Pi[(\hat{x}_{k|k}^2-x_k)+x_k]\}\\
\nonumber&=\mathbb{E}\{[(x_k-\hat{x}_{k|k}^2)+\hat{x}_{k|k}^2]'\Pi(x_{k|k}^2-x_k)\}+\mathbb{E}(x_k'\Pi x_k)\\
         &=\mathbb{E}(x_k'\Pi x_k)\hspace{-0.8mm}-\hspace{-0.8mm}tr[\Sigma_{k|k}^1\Pi]\hspace{-0.8mm}-\hspace{-0.8mm}\mathbb{E}[(\hat{x}_{k|k}^1\hspace{-0.8mm}-\hspace{-0.8mm}\hat{x}_{k|k}^2)'\Pi(\hat{x}_{k|k}^1-\hat{x}_{k|k}^2)].\label{51}
\end{align}
Now we shall show that the optimal cost is as (\ref{37}). Using (\ref{13}), (\ref{40}), (\ref{41}), (\ref{49})-(\ref{51}) and (\ref{27})-(\ref{33}), we get
\begin{align}
\nonumber &V_k-V_{k+1}\\
\nonumber&=\mathbb{E}[x_k'P_kx_k+(\hat{x}^1_{k|k}-\hat{x}^2_{k|k})'(S_k-P_k)(\hat{x}^1_{k|k}-\hat{x}^2_{k|k})\\
\nonumber&\quad-tr(\Sigma_{k|k}^1P_k)]-\mathbb{E}[x_{k+1}'P_{k+1}\hat{x}^2_{k+1|k+1}\\
\nonumber&\quad+x_{k+1}'S_{k+1}(\hat{x}^1_{k+1|k+1}-\hat{x}^2_{k+1|k+1})]\\
\nonumber&=\mathbb{E}[x_k'P_kx_k+(\hat{x}^1_{k|k}-\hat{x}^2_{k|k})'(S_k-P_k)(\hat{x}^1_{k|k}-\hat{x}^2_{k|k})\\
\nonumber&\quad-tr(\Sigma_{k|k}^1P_k)]-\hspace{-0.8mm}\mathbb{E}\{x_k'A'P_{k+1}A\hat{x}^2_{k|k}\hspace{-0.8mm}+\hspace{-0.8mm}x_k'A'P_{k+1}Bu_k\\
\nonumber&\quad+x_k'A'P_{k+1}G^2_{k+1|k}H^2A(x_k-\hat{x}^2_{k|k})+x_k'A'P_{k+1}G^{2}_{k+1|k}\\
\nonumber&\quad\times H^2B^1\tilde{u}^1_k\hspace{-0.8mm}+\hspace{-0.8mm}x_k'A'S_{k+1}A(\hat{x}^1_{k|k}\hspace{-0.8mm}-\hspace{-0.8mm}\hat{x}^2_{k|k})\hspace{-0.8mm}+\hspace{-0.8mm}x_k'A'S_{k+1}B^1\tilde{u}^1_k\\
\nonumber&\quad+x_k'A'S_{k+1}G_{k+1|k}HA(x_k\hspace{-0.8mm}-\hspace{-0.8mm}\hat{x}^1_{k|k})\hspace{-0.8mm}-\hspace{-0.8mm}x_k'A'S_{k+1}G^2_{k+1|k}H^2\\
\nonumber&\quad\times A(x_k\hspace{-0.8mm}-\hspace{-0.8mm}\hat{x}^2_{k|k})\hspace{-0.8mm}-\hspace{-0.8mm}x_k'A'S_{k+1}G^2_{k+1|k}H^2B^1\tilde{u}^1_k+u_k'B'P_{k+1}\\
\nonumber&\quad\times A\hat{x}^2_{k|k}+u_k'B'P_{k+1}Bu_k\hspace{-0.8mm}+\hspace{-0.8mm}\tilde{u}^{1'}_kB^{1'}P_{k+1}G^2_{k+1|k}H^2B^1\tilde{u}^1_k\\
\nonumber&\quad+\tilde{u}^{1'}_kB^{1'}P_{k+1}G^2_{k+1|k}H^2A(x_k-\hat{x}^2_{k|k})+\tilde{u}_k^{1'}B^{1'}S_{k+1}A\\
\nonumber&\quad\times(\hat{x}^1_{k|k}\hspace{-0.8mm}-\hspace{-0.8mm}\hat{x}^2_{k|k})\hspace{-0.8mm}+\hspace{-0.8mm}\tilde{u}^{1'}_kB^{1'}S_{k+1}B^1\tilde{u}^1_k\hspace{-0.8mm}-\hspace{-0.8mm}\tilde{u}^{1'}_kB^{1'}S_{k+1}G^2_{k+1|k}\\
\nonumber&\quad\times H^2A(x_k-\hat{x}^2_{k|k})-\tilde{u}_k^{1'}B^{1'}S_{k+1}G^2_{k+1|k}H^2B^1\tilde{u}^1_k\\
\nonumber&\quad+\omega_k'[(P_{k+1}-S_{k+1})G^2_{k+1|k}H^2+S_{k+1}G_{k+1}H]\omega_k\}\\
\nonumber&=\mathbb{E}[x_k'P_kx_k+(\hat{x}^1_{k|k}-\hat{x}^2_{k|k})'(S_k-P_k)(\hat{x}^1_{k|k}-\hat{x}^2_{k|k})\\
\nonumber&\quad-tr(\Sigma_{k|k}^1\Pi)]-\mathbb{E}\{x_k'A'P_{k+1}Ax_k-(x_k-\hat{x}^2_{k|k})'A'\\
\nonumber&\quad\times P_{k+1}A(x_k-\hat{x}^2_{k|k})+\hspace{-0.8mm}\hat{x}^2_{k|k}A'P_{k+1}Bu_k\hspace{-0.8mm}+(x_k-\hspace{-0.8mm}\hat{x}^2_{k|k})'A'\\
\nonumber&\quad\times P_{k+1}G^2_{k+1|k}H^2A(x_k-\hat{x}^2_{k|k})+(\hat{x}^1_{k|k}-\hat{x}^2_{k|k})'A'P_{k+1}\\
\nonumber&\quad\times G^2_{k+1|k}H^2B^1\tilde{u}^1_{k}+(\hat{x}^1_{k|k}-\hspace{-0.8mm}\hat{x}^2_{k|k})'A'S_{k+1}A(\hat{x}^1_{k|k}-\hspace{-0.8mm}\hat{x}^2_{k|k})\\
\nonumber&\quad+(\hat{x}^1_{k|k}\hspace{-0.8mm}-\hspace{-0.8mm}\hat{x}^2_{k|k})'A'S_{k+1}B^1\tilde{u}^1_k\hspace{-0.8mm}+\hspace{-0.8mm}(x_k\hspace{-0.8mm}-\hspace{-0.8mm}\hat{x}^1_{k|k})'A'S_{k+1}G_{k+1|k}\\
\nonumber&\quad\times HA(x_k-\hat{x}^1_{k|k})-(x_k-\hat{x}^2_{k|k})'A'S_{k+1}G^2_{k+1|k}H^2A\\
\nonumber&\quad\times(x_k-\hat{x}^2_{k|k})-(\hat{x}^1_{k|k}-\hspace{-0.8mm}\hat{x}^2_{k|k})'A'S_{k+1}G^2_{k+1|k}H^2B^1\tilde{u}^1_k\\
\nonumber&\quad+u_k'B'P_{k+1}A\hat{x}^2_{k|k}\hspace{-0.8mm}+\hspace{-0.8mm}u_k'B'P_{k+1}Bu_k\hspace{-0.8mm}+\hspace{-0.8mm}\tilde{u}^{1'}_kB^{1'}P_{k+1}G^2_{k+1|k}\\
\nonumber&\quad\times H^2B^1\tilde{u}^1_k+\tilde{u}^{1'}_kB^{1'}P_{k+1}G^2_{k+1|k}H^2A(\hat{x}^1_{k|k}-\hspace{-0.8mm}\hat{x}^2_{k|k})\\
\nonumber&\quad+\tilde{u}^{1'}_kB^{1'}S_{k+1}A(\hat{x}^1_{k|k}-\hspace{-0.8mm}\hat{x}^2_{k|k})+\tilde{u}^{1'}_kB^{1'}S_{k+1}B^1\tilde{u}^1_k\\
\nonumber&\quad-\tilde{u}^{1'}_kB^{1'}S_{k+1}G^2_{k+1|k}H^2A(\hat{x}^1_{k|k}-\hspace{-0.8mm}\hat{x}^2_{k|k})-\tilde{u}^{1'}_kB^{1'}S_{k+1}\\
\nonumber&\quad\times G^2_{k+1|k}H^2B^1\tilde{u}^1_k+\omega_k'[(P_{k+1}-S_{k+1})G^2_{k+1|k}H^2\\
\nonumber&\quad+S_{k+1}G_{k+1}H]\omega_k\}\\
\nonumber&=\mathbb{E}\bigg\{x_k'(P_k-A'P_{k+1}A+M_k'\Upsilon_k^{-1}M_k)x_k-[2u_k'B'\\
\nonumber&\quad\times P_{k+1}A\hat{x}^2_{k|k}+u_k'(\Upsilon_k-R)u_k+\hat{x}^{2'}_{k|k}M_k'\Upsilon_k^{-1}M_k\hat{x}^{2}_{k|k}]\\
\nonumber&\quad-\tilde{u}^{1'}_kB^{1'}[(P_{k+1}-S_{k+1})G^2_{k+1|k}H^2+S_{k+1}]A(\hat{x}^1_{k|k}
\end{align}
\begin{align}
\nonumber&\quad-\hat{x}^2_{k|k})-(\hat{x}^1_{k|k}-\hat{x}^2_{k|k})'A'[(P_{k+1}-S_{k+1})G^2_{k+1|k}H^2\\
\nonumber&\quad+\hspace{-0.8mm}S_{k+1}]B^1\tilde{u}^1_k\hspace{-0.8mm}-\hspace{-0.8mm}\tilde{u}^{1'}_kB^{1'}(\Lambda_k\hspace{-0.8mm}-\hspace{-0.8mm}R^1)B^1\tilde{u}^1_k\hspace{-0.8mm}-\hspace{-0.8mm}(\hat{x}^1_{k|k}\hspace{-0.8mm}-\hspace{-0.8mm}\hat{x}^2_{k|k})'\{P_k\\
\nonumber&\quad-A'P_{k+1}A+A[(P_{k+1}-S_{k+1})G^2_{k+1|k}H^2+S_{k+1}]A\\
\nonumber&\quad-S_k+M_k'\Upsilon_k^{-1}M_k\}(\hat{x}^1_{k|k}-\hspace{-0.8mm}\hat{x}^2_{k|k})-(x_k-\hat{x}^1_{k|k})'[Q-A'\\
\nonumber&\quad\times\hspace{-0.8mm}(S_{k+1}G^2_{k+1|k}H^2\hspace{-0.8mm}-\hspace{-0.8mm}P_{k+1}G^2_{k+1|k}H^2\hspace{-0.8mm}-\hspace{-0.8mm}S_{k+1}G_{k+1|k}H)A]\\
\nonumber&\quad\times(x_k-\hat{x}^1_{k|k})-\omega_k'[(P_{k+1}-S_{k+1})G^2_{k+1|k}H^2\\
\nonumber&\quad+S_{k+1}G_{k+1}H]\omega_k\bigg\}\\
\nonumber&=\mathbb{E}\{x_k'Qx_k+u_{k}'Ru_{k}+\tilde{u}_{k}^{1'}R^1\tilde{u}_{k}^1-(u_k+\Upsilon_k^{-1}M_k\hat{x}^2_{k|k})'\\
\nonumber&\quad\times\Upsilon_k(u_k+\Upsilon_k^{-1}M_k\hat{x}^2_{k|k})-[\tilde{u}^1_k+(\Lambda_k^{-1})'L_k^0(\hat{x}^1_{k|k}-\hspace{-0.8mm}\hat{x}^2_{k|k})]'\\
\nonumber&\quad\times\Lambda_k[\tilde{u}^1_k+\hspace{-0.8mm}\Lambda_k^{-1}L_k(\hat{x}^1_{k|k}-\hspace{-0.8mm}\hat{x}^2_{k|k})]\}\hspace{-0.8mm}-tr\{\Sigma^1_{k|k}[Q-\hspace{-0.8mm}A'(S_{k+1}\\
\nonumber&\quad-\Phi_{k+1}-S_{k+1}G_{k+1|k}H)A]\}-tr[Q_\omega(S_{k+1}G_{k+1}H\\
\nonumber&\quad+\Phi_{k+1}-S_{k+1})].
\end{align}
Adding from $k=0$ to $k=N$ on both sides of the above equation, the cost function (\ref{15}) can be calculated as
\begin{align}
\nonumber J_N&=\mathbb{E}[x_0'P_0\hat{x}^2_{0|0}+x_0'S_0(\hat{x}^1_{0|0}-\hat{x}^2_{0|0})]+tr(\Sigma^1_{N+1|N+1}Q)\\
\nonumber&\quad+\sum_{k=0}^N\{(u_k+\Upsilon_k^{-1}M_k\hat{x}^2_{k|k})'\Upsilon_k(u_k+\Upsilon_k^{-1}M_k\hat{x}^2_{k|k})\\
\nonumber&\quad+[\tilde{u}^1_k+(\Lambda_k^{-1})'L_k^0(\hat{x}^1_{k|k}-\hspace{-0.8mm}\hat{x}^2_{k|k})]'\Lambda_k[\tilde{u}^1_k+\hspace{-0.8mm}\Lambda_k^{-1}L_k(\hat{x}^1_{k|k}\\
\nonumber&\quad-\hspace{-0.8mm}\hat{x}^2_{k|k})]\}+\sum_{k=0}^N\{tr\{\Sigma^1_{k|k}[Q-\hspace{-0.8mm}A'(S_{k+1}-\Phi_{k+1}-S_{k+1}\\
\nonumber&\quad\hspace{-0.8mm}\times \hspace{-0.8mm} G_{k+1|k}H)A]\}\hspace{-0.8mm}+\hspace{-0.8mm}tr[Q_\omega(S_{k+1}G_{k+1}H\hspace{-0.8mm}+\hspace{-0.8mm}\Phi_{k+1}\hspace{-0.8mm}-\hspace{-0.8mm}S_{k+1})]\}
\end{align}
Substituting the optimal controllers (\ref{35}) and (\ref{36}) into the above equation, we have the optimal cost  as (\ref{37}). This ends the proof of Theorem 1.
\end{proof}
\section{Proof of Theorem 2}
\begin{proof}
Assuming that ($A,H$) is detectable, (\ref{new1}), (\ref{new2}) converge to $P$, $S$ and (\ref{new50}) converges to $\Sigma^2$ when the iteration $k$ is large enough, the optimal controllers (\ref{new8}) and (\ref{new9}) can be obtained by the similar procedure in Theorem 1. Under the assumption that ($A,H$) is detectable, the stead-state kalman filter (\ref{new12}) can be derived \cite{R22}.

Now we shall show that (\ref{new8}) and (\ref{new9}) make the system (\ref{13}) bounded in the mean-square sense. Substituting (\ref{new8}) and (\ref{new9}) into the system (\ref{13}), we have
\begin{align}
\nonumber x_{k+1}&=Ax_k-B\Upsilon^{-1}M\hat{x}_{k|k}^2-B^1\Lambda^{-1}L(\hat{x}_{k|k}^1-\hat{x}_{k|k}^2)+\omega_k\\
\nonumber        &=Ax_k+B\Upsilon^{-1}M[(x_k-\hat{x}_{k|k}^2)-x_k]\\
\nonumber        &\quad-B^1\Lambda^{-1}L(\hat{x}_{k|k}^1-\hat{x}_{k|k}^2)+\omega_k\\
\nonumber        &=(A-B\Upsilon^{-1}M)x_k+B\Upsilon^{-1}M(x_k-\hat{x}_{k|k}^2)\\
                 &\quad-B^1\Lambda^{-1}L(\hat{x}_{k|k}^1-\hat{x}_{k|k}^2)+\omega_k.\label{new14}
\end{align}
Next we give the following property:
\begin{align}
\nonumber&\mathbb{E}[x_k'(\hat{x}_{k|k}^1-\hat{x}_{k|k}^2)]\\
\nonumber&=\mathbb{E}\{[(x_k-\hat{x}_{k|k}^1)+(\hat{x}_{k|k}^1-\hat{x}_{k|k}^2)+\hat{x}_{k|k}^2]'(\hat{x}_{k|k}^1-\hat{x}_{k|k}^2)\}\\
\nonumber&=tr(\Sigma_{k|k}^1+\Sigma_{k|k}^2-\Sigma_{k|k}^1)\\
         &=tr(\Sigma_{k|k}^2).\label{new15}
\end{align}
By making use of (\ref{new16}), (\ref{new17}), (\ref{new15}) and (\ref{new14}), we get
\begin{align}
\nonumber&\mathbb{E}(x_{k+1}'x_{k+1})\\
\nonumber&=\mathbb{E}[x_k'(A\hspace{-0.8mm}-\hspace{-0.8mm}B\Upsilon^{-1}M)'(A\hspace{-0.8mm}-\hspace{-0.8mm}B\Upsilon^{-1}M)x_k]+\hspace{-0.8mm}2tr[\Sigma_{k|k}^2(A-B\\
\nonumber&\quad\times\Upsilon^{-1}M)B\Upsilon^{-1}M-2\Sigma_{k|k}^2(A-B\Upsilon^{-1}M)'B^1\Lambda^{-1}L\\
\nonumber&\quad-2(\Sigma_{k|k}^2-\Sigma_{k|k}^1)M'\Upsilon^{-1}B'B^1\Lambda^{-1}L+\Sigma_{k|k}^2M'\Upsilon^{-1}B'\\
&\quad\times B\Upsilon^{-1}M+(\Sigma_{k|k}^2-\Sigma_{k|k}^1)L'\Lambda^{-1}B^{1'}B^1\Lambda^{-1}L+Q_\omega].\label{new18}
\end{align}
Since $\Sigma_{k|k}^1$ and $\Sigma_{k|k}^2$ are convergent, the second term is convergent. Thus, the system (\ref{13}) is bounded in the mean square sense if and only if the linear system
\begin{align}
\alpha_{k+1}=(A-B\Gamma^{-1}M)\alpha_k,\label{new19}
\end{align}
is stable in the mean square sense, with initial value $\alpha_0=x_0$.

Now we shall show that the system (\ref{new19}) is stable in the mean square sense. To this end, we rewrite (\ref{new10}) as
\begin{align}
\nonumber P&=M'\Upsilon^{-1}R\Upsilon^{-1}M+Q\\
             &\quad+(A-B\Upsilon^{-1}M)'P(A-B\Upsilon^{-1}M).\label{new20}
\end{align}
Define the Lyapunov function as
\begin{align}
\nonumber\tilde{V}_k=\mathbb{E}(\alpha_k'P\alpha_k).
\end{align}
In virtue of (\ref{new20}), we have
\begin{align}
&\nonumber\tilde{V}_{k+1}-\tilde{V}_k\\
\nonumber &=\mathbb{E}\{\alpha_k'[(A-B\Upsilon^{-1}M)'P(A-B\Upsilon^{-1}M)-P]\alpha_k\}\\
 &=-\mathbb{E}[\alpha_k'(M'\Upsilon^{-1}R\Upsilon^{-1}M+Q)\alpha_k], \label{new21}
\end{align}
which means that $\tilde{V}_k$ decreases with respect to $k$. Due to the semi-definite positiveness of $P$, it can be known that $\tilde{V}_k$ is bounded below. Thus, $\tilde{V}_k$ is convergent.

Taking summation from $k=0$ to $k=n$ on both sides of (\ref{new21}) yields
\begin{align}
\nonumber &\tilde{V}_{n+1}-\tilde{V}_0=-\sum_{k=0}^n\mathbb{E}[\alpha_k'(M'\Upsilon^{-1}R\Upsilon^{-1}M+Q)\alpha_k].
\end{align}
Letting $n\to\infty$ on both sides of the above equation, we have
\begin{align}
\nonumber&\lim_{n\to\infty}\mathbb{E}(\alpha_{n+1}'P\alpha_{n+1})\\
\nonumber&=\mathbb{E}(\alpha_0'P\alpha_0)-\lim_{n\to\infty}\sum_{k=0}^n\mathbb{E}[\alpha_k'(M'\Upsilon^{-1}R\Upsilon^{-1}M+Q)\alpha_k].
\end{align}
Since $\tilde{V}_k$ is convergent, we have that $\lim_{n\to\infty}\mathbb{E}[\alpha_n'(M'\Upsilon^{-1}R\Upsilon^{-1}M+Q)\alpha_n]=0$. Thus, $\lim_{n\to\infty}\mathbb{E}[\alpha_n'\alpha_n]=0$, i.e., the system (\ref{new19}) is stable in the mean square sense. Hence, the system (\ref{13}) is bounded in the mean square sense.
\end{proof}

\end{document}